\theoremstyle{plain}
\newtheorem{theorem}{Theorem}[section]
\newtheorem{lemma}[theorem]{Lemma}
\newtheorem{prop}[theorem]{Proposition}
\newtheorem{proposition}[theorem]{Proposition}
\theoremstyle{definition}
\newtheorem{example}[theorem]{Example}
\newcommand{\abs}[1]{\lvert#1\rvert}
\newtheorem{remark}[theorem]{Remark}
\let\phi\varphi
\let\epsilon\varepsilon
\newcommand{\norm}[1]{\Vert#1\Vert}
\newcommand{\bignorm}[1]{\bigl\Vert#1\bigr\Vert}
\newcommand{\Bignorm}[1]{\Bigl\Vert#1\Bigr\Vert}
 \newcommand{\A}{\mbox{${\mathcal A}$}}
   \newcommand{\D}{\mbox{${\mathcal D}$}}
   \renewcommand{\H}{\mbox{${\mathcal H}$}}
    \newcommand{\M}{\mbox{${\mathcal M}$}}
   \newcommand{\N}{\mbox{${\mathcal N}$}}
\begin{document}
\title{surjective separating maps on noncommutative $L^p$-spaces}

\author[C. Le Merdy]{Christian Le Merdy}

\address{Laboratoire de Math\'ematiques de Besan\c{c}on,
Universite Bourgogne Franche-Comt\'e, France}
\email{\texttt{christian.lemerdy@univ-fcomte.fr}}

\author[S. Zadeh]{Safoura Zadeh}
\address{
Max-Planck-Institut f\"{u}r Mathematik,
Vivatsgasse 7, 53111, Bonn, Germany \&\newline
\indent 
Mathematisches Institut, Westf\"{a}lische Wilhelms-Universit\"{a}t M\"{u}nster, Germany.
%Laboratoire de Math\'ematiques de Besan\c{c}on,
%Universite Bourgogne Franche-Comt\'e, France 
%\& Faculty of Graduate Studies, Dalhousie University, Canada.
}
\email{\texttt{jsafoora@gmail.com}}

\keywords{von Neumann algebras, noncommutative $L^p$-spaces, separating maps, operator spaces.}

\subjclass[2000]{}
%\footnotetext[1]{This work is supported by }
\begin{abstract}
Let $1\leq p<\infty$ and let $T\colon L^p(\M)\to L^p(\N)$ be a bounded map between noncommutative $L^p$-spaces. If $T$ is bijective and separating (i.e., for any $x,y\in L^p(\M)$ such that $x^*y=xy^*=0$, we have $T(x)^*T(y)=T(x)T(y)^*=0$), we prove the existence of decompositions $\M=\M_1\mathop{\oplus}\limits^\infty\M_2$, $\N=\N_1 \mathop{\oplus}\limits^\infty\N_2$ and maps $T_1\colon L^p(\M_1)\to L^p(\N_1)$, $T_2\colon L^p(\M_2)\to L^p(\N_2)$, such that $T=T_1+T_2$, $T_1$ has a direct Yeadon type factorisation and $T_2$ has an anti-direct Yeadon type factorisation. We further show that $T^{-1}$ is separating in this case. Next we prove that for any $1\leq p<\infty$ (resp. any $1\leq p\not=2<\infty$), a surjective separating map $T\colon L^p(\M)\to L^p(\N)$ is $S^1$-bounded (resp. completely bounded) if and only if there exists a decomposition $\M=\M_1 \mathop{\oplus}\limits^\infty\M_2$ such that $T|_{L^p({\tiny \M_1})}$ has a direct Yeadon type factorisation and $\M_2$ is subhomogeneous.
\end{abstract}
\maketitle
%%%%%%%%%%%%%%%%%%%%%%%%%%%%%%%%%%%%%%
\section{Introduction}\label{sec1}
%%%%%%%%%%%%%%%%%%%%%%%%%%%%%%%%%%%%
This paper deals with separating maps between noncommutative $L^p$-spaces, $1\leq p<\infty$. These operators were investigated recently in \cite{LMZ,LMZ2,HRW}, to which we refer for background, motivation and historical facts. Recall that a bounded map $T\colon L^p(\M)\to L^p(\N)$ between two noncommutative $L^p$-spaces is called separating if for any $x,y\in L^p(\M)$, the condition $x^*y=xy^*=0$ implies that $T(x)^*T(y)=T(x)T(y)^*=0$. It was shown in \cite[Proposition 3.11]{LMZ} and \cite[Theorem 3.3 \& Remark 3.4]{HRW} that $T\colon L^p(\M)\to L^p(\N)$ is separating if and only if there exists a $w^*$-continuous Jordan homomorphism $J\colon\M\to \N$, a positive operator $B$ affiliated with $\N$ and commuting with the range of $J$, as well as a partial isometry $w\in\N$ such that $w^*w=s(B)=J(1)$ and
$$
T(x)=wBJ(x),\qquad (x\in \M\cap L^p(\M)).
$$    
Such a factorization (which is necessarily unique) is called a Yeadon type factorization in \cite{LMZ,LMZ2}. We further say that $T$ admits a direct Yeadon type factorization if the Jordan homomorphism $J$ in this factorization is a $*$-homomorphism. It is proved in \cite[Proposition 4.4]{LMZ2} and \cite[Theorem 3.6]{HRW} that any separating map $T\colon L^p(\M)\to L^p(\N)$ with a direct Yeadon type factorization is necessarily completely bounded. It is also proved in \cite[Proposition 4.5]{LMZ2} that any such map is $S^1$-bounded (see Section 2 below for the definition). The main purpose of the present paper is to establish a form of converse of these results for surjective maps. More precisely, we prove the following characterizations.

\noindent {\bf Theorem.}
Let $1\leq p<\infty$, let $\M,\N$ be semifinite von Neumann algebras and let $T\colon L^p(\M)\to L^p(\N)$ be a surjective separating map. The following are equivalent :
\begin{itemize}
\item [(i)] $T$ is $S^1$-bounded;
\item [(ii)] There exists a direct sum decomposition $\M=\M_1\mathop{\oplus}\limits^\infty \M_2$ such that the restriction of $T$ to $L^p(\M_1)$ has a direct Yeadon type factorization and $\M_2$ is subhomogeneous.
\end{itemize}
Moreover if $p\not=2$, then (ii) is also equivalent to :
\begin{itemize}
\item [(iii)] $T$ is completely bounded.
\end{itemize}
These results will be proved in Section 4. We also provide an example showing that the surjectivity assumption cannot be dropped. In section 3, we establish a general decomposition result for bijective separating maps which plays a key role in the above characterization results. We prove in passing that the inverse of any bijective separating map is separating as well. Section 2 is preparatory.
 
%%%%%%%%%%%%%%%%%%%%%%%%%%%%%%%%%%%%%%
\section{Background}
%%%%%%%%%%%%%%%%%%%%%%%%%%%%%%%%%%%%%%%
In this section we recall some necessary background on semifinite noncommutative $L^p$-spaces and subhomogeneous von Neumann algebras.

Let $\M$ be a semifinite von Neumann algebra with a normal semifinite faithful trace $\tau_{\tiny\M}$. Assume that $\M\subset B(\H)$ acts 
on some Hilbert space $\H$. Let $L^0(\M)$ 
denote the $*$-algebra of all closed densely defined (possibly unbounded)
operators on $\H$, which are
$\tau_{_{\tiny{\M}}}$-measurable. 
Then for any $1\leq p<\infty$, the noncommutative $L^p$-space associated with $\mathcal{M}$ can be defined as
$$
L^p(\mathcal{M}):=\bigl\{x\in L^0(\mathcal{M})
\,:\,\tau_{\tiny\M}(\lvert x\rvert^p)<\infty\bigr\}.
$$
We set 
$\|x\|_p:=\tau_{\tiny\M}(\left\lvert x\rvert^p\right)^{\frac{1}{p}}$
for any 
$x\in L^p(\M)$. Then $L^p(\M)$ equipped with $\norm{\,\cdotp}_p$
is a Banach space.  The reader may consult \cite{JX,PX,T} and the references therein for details and further properties.

We let $S^p$, $1\leq p<\infty$, denote the noncommutative $L^p$-space built upon $B(\ell^2)$ with its usual trace; this is in fact the Schatten $p$-class of operators on $\ell^2$. For any $m\geq 1$, we let $S^p_m$ denote the Schatten $p$-class of $m\times m$
matrices. Whenever $E$ is an operator space, we let $S^p_m[E]$ denote the $E$-valued Schatten space introduced in \cite[Chapter1]{P5}.

Recall that we may identify $L^p(\M\otimes M_m)$ with $L^p(\M)\otimes S^p_m$ in a natural way. Let $\N$ be, possibly, another semifinite von Neumann algebra. We say that an operator $T:L^p(\M)\to L^p(\N)$ is completely bounded if there exists a constant $K\geq0$ such that 
$$
 \| T\otimes I_{S^p_m}\colon L^p(\M\otimes M_m)\to L^p(\N\otimes M_m)\|\leq K,
 $$
 for any $m\geq1$. In this case, the completely bounded norm of $T$ is the smallest such uniform bound and is denoted by $\|T\|_{cb}$. We further say that $T$ is a complete isometry if $T\otimes I_{S^p_m}$ is an isometry for any $m\geq1$.
 
In \cite[Section 3]{LMZ2}, we introduced $S^1$-valued noncommutative $L^p$-spaces, which naturally extend previous constructions from \cite{P5,J}. We recall this definition here. 

For $1\leq p<\infty$, the $S^1$-valued noncommutative $L^p$-space, $L^p(\M;S^1)$, is the space of all infinite matrices $[x_{ij}]_{i,j\geq1}$ in $L^p(\M)$ for which there exist families $(a_{ik})_{i,k\geq1}$ and $(b_{kj})_{k,j\geq1}$ in $L^{2p}(\M)$ such that $\sum_{i,k}a_{ik}a_{ik}^*$ and $\sum_{k,j}b_{kj}^*b_{kj}$ converge in $L^p(\M)$ and for all $i,j\geq1$,
$$
x_{ij}=\sum_{k=1}^\infty a_{ik}b_{kj}.
$$
We equip $L^p(\M;S^1)$ with the following norm
\begin{align}\label{defS1Norm}
\|[x_{ij}]\|_{L^p(\tiny\M;S^1)}=\inf\left\{\Bignorm{\sum_{i,k=1}^\infty a_{ik}a_{ik}^*}_p^{\frac{1}{2}} \Bignorm{\sum_{k,j=1}^\infty b_{kj}^*b_{kj}}_p^{\frac{1}{2}}\right\},
\end{align}
where the infimum is taken over all families $(a_{ik})_{i,k\geq1}$ and $(b_{kj})_{k,j\geq1}$ as above.
The space $L^p(\M;S^1)$ endowed with this norm is a Banach space. 

For any integer $m\geq 1$, we let $L^p(\M;S^1_m)$ be the subspace of $L^p(\M;S^1)$ of matrices $[x_{ij}]_{i,j\geq1}$ with support in $\{1,\ldots,m\}^2$.

Following \cite[Definition 3.8]{LMZ2}, we say that a bounded operator $T:L^p(\M)\to L^p(\N)$ is $S^1$-bounded if there exists a constant $K\geq0$ such that 
$$
\|T\otimes I_{S^1_m}\colon L^p(\M;S^1_m)\longrightarrow L^p(\N;S^1_m)\|\leq K,
$$
for any $m\geq1$. In this case, the $S^1$-bounded norm of $T$ is the smallest such uniform bounded and is denoted by $\|T\|_{S^1}$. We further say that $T:L^p(\M)\to L^p(\N)$ is an $S^1$-isometry if for each $m\geq1$,
 $T\otimes I_{S^1_m}$ is an isometry.
 
 We proved in \cite{LMZ2} that for any $n\geq1$, $L^p(M_n; S^1_m)=S^p_n[S^1_m]$ isometrically. Further, if $\M, \N$ are hyperfinite, then $T:L^p(\M)\to L^p(\N)$ is $S^1$-bounded if and only if it is regular in the sense of \cite{P2}.
 
We note that any direct sum $\M=\M_1\mathop{\oplus}\limits^{\infty}\M_2$ induces isometric identifications $L^p(\M)=L^p(\M_1)\mathop{\oplus}\limits^p L^p(\M_2)$ and $L^p(\M;S^1)=L^p(\M_1;S^1)\mathop{\oplus}\limits^p L^p(\M_2;S^1)$ (see \cite[Lemma 5.2]{LMZ2} for the last identification). 
 
Recall that a $C^*$-algebra $\A$ is called subhomogeneous of degree $\leq N$ if all irreducible representations of $\mathcal{A}$ are of maximum dimension $N$. If $\A$ is subhomogeneous of degree $\leq N$, for some $N$, we simply say that $\A$ is subhomogeneous. It is well-known (see for example \cite[Theorem 7.1.1]{Ru}) that $\M$ is a subhomogeneous von Neumann algebra of degree $\leq N$ if and only if there exist $r\geq1$, integers $1\leq n_1\leq n_2\leq \ldots\leq n_r\leq N$ and abelian von Neumann algebras $L^\infty(\Omega_1),\ldots, L^\infty(\Omega_r)$ such that
 \begin{align}\label{n-sub}
 \M\simeq \mathop\oplus\limits_{1\leq j\leq r}^\infty L^\infty(\Omega_j;M_{n_j}).
 \end{align}

If a von Neumann algebra $\M$ is not subhomogeneous of degree $\leq N$, it is well-known that there is a non zero $\ast$-homomorphism $\gamma:M_{N+1}\to\M$. Lemma \ref{l2} below makes this more explicit in the semifinite case.
\begin{lemma}\label{l2}
	Let $\M$ be a semifinite von Neumann algebra and let  $N\geq1$. If $\M$ is not subhomogeneous of degree $\leq N$, then there is a complete isometry from $S^p_{N+1}$ into $L^p(\M)$ that is also an $S^1$-isometry.
\end{lemma}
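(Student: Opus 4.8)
The plan is to realize, inside $L^p(\M)$, a trace-scaled copy of $S^p_{N+1}$ built from a finite-trace system of matrix units, and then to certify that the resulting embedding is simultaneously a complete isometry and an $S^1$-isometry. Since $\M$ is not subhomogeneous of degree $\leq N$, I start from a nonzero $*$-homomorphism $\gamma\colon M_{N+1}\to\M$ and set $e_{ij}=\gamma(E_{ij})$, where $(E_{ij})$ are the standard matrix units of $M_{N+1}$; as $M_{N+1}$ is simple, $\gamma$ is injective and each $e_{ij}\neq 0$. Using that $\tau_{\tiny\M}$ is semifinite and faithful, I choose a nonzero projection $f\leq e_{11}$ with $t:=\tau_{\tiny\M}(f)\in(0,\infty)$ and put $\tilde e_{ij}:=e_{i1}fe_{1j}$. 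A direct verification (using $f\leq e_{11}$ and $f^2=f$) shows that $(\tilde e_{ij})_{1\leq i,j\leq N+1}$ is again a system of matrix units, so $\Gamma(E_{ij}):=\tilde e_{ij}$ extends to a normal $*$-isomorphism of $M_{N+1}$ onto $\D:=\mathrm{span}\{\tilde e_{ij}\}$. By the trace property $\tau_{\tiny\M}(\tilde e_{ii})=\tau_{\tiny\M}(f)=t$ for every $i$, so $Q:=\sum_i\tilde e_{ii}$ is a projection with $\tau_{\tiny\M}(Q)=(N+1)t<\infty$ and $\tau_{\tiny\M}\circ\Gamma=t\,\mathrm{tr}_{N+1}$.

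Next I define $u\colon S^p_{N+1}\to L^p(\M)$ by $u([a_{ij}])=t^{-1/p}\sum_{i,j}a_{ij}\tilde e_{ij}$. Since $\D\cong M_{N+1}$ carries the trace $t\,\mathrm{tr}_{N+1}$, for $x=\sum a_{ij}\tilde e_{ij}$ one gets $\tau_{\tiny\M}(\abs{x}^p)=t\,\mathrm{tr}_{N+1}(\abs{[a_{ij}]}^p)$, so $u$ is isometric. Amplifying, the elements $\tilde e_{ij}\otimes E_{kl}$ form matrix units for a copy of $M_{(N+1)m}$ in $\M\otimes M_m$ on which $\tau_{\tiny\M}\otimes\mathrm{tr}_m$ again equals $t\,\mathrm{tr}_{(N+1)m}$; under the identification $S^p_{N+1}\otimes S^p_m=S^p_{(N+1)m}$ the map $u\otimes I_{S^p_m}$ is precisely the analogous trace-scaled embedding, hence isometric for each $m$, so $u$ is a complete isometry.

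For the $S^1$-isometry I prove two contractive bounds and squeeze. The upper bound $\norm{(u\otimes I_{S^1_m})(\xi)}\leq\norm{\xi}$ follows by transporting factorizations through $\Gamma$: since $\Gamma$ extends multiplicatively to the relevant $L^{2p}$-spaces and is isometric on positive elements, any factorization $\xi_{kl}=\sum_s\alpha_{ks}\beta_{sl}$ pushes forward to one of $u(\xi_{kl})$ leaving the two controlling quantities $\norm{\sum_{k,s}\alpha_{ks}\alpha_{ks}^*}_p^{1/2}$ and $\norm{\sum_{s,l}\beta_{sl}^*\beta_{sl}}_p^{1/2}$ unchanged up to the fixed scalar. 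For the reverse bound I build a left inverse $r\colon L^p(\M)\to S^p_{N+1}$ as the compression $x\mapsto QxQ$ onto the finite corner $Q\M Q$, followed by the trace-preserving normal conditional expectation $\E\colon Q\M Q\to\D$ and the scaling identification $L^p(\D)\cong S^p_{N+1}$; a short computation gives $r\circ u=\mathrm{id}$. Both the compression and $\E$ are directly seen to be $S^1$-contractions from the factorization definition of the $S^1$-norm, so $r$ is an $S^1$-contraction, and $\norm{\xi}=\norm{(r\otimes I_{S^1_m})(u\otimes I_{S^1_m})(\xi)}\leq\norm{(u\otimes I_{S^1_m})(\xi)}$. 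Together with the upper bound this makes $u$ an $S^1$-isometry.

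The algebraic manipulations with matrix units and the trace computations are routine; the genuine content, and the step I expect to be the main obstacle, is the reverse $S^1$-inequality. It relies on the trace-preserving conditional expectation $\E$, whose very existence requires that $\D$ be a finite-dimensional, hence expected, subalgebra of the \emph{finite} algebra $Q\M Q$ --- this is exactly why the passage to the finite corner $Q\M Q$ is essential --- together with the verification that $\E$ and the corner compression descend to $S^1$-contractions at the $L^p$-level.
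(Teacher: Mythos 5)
Your construction of the complete isometry is correct, and it takes a genuinely different route from the paper: the paper splits $\M$ into its type I and type II summands and, in each case, realizes $M_{N+1}$ as a tensor factor of an entire direct summand, embedding via $a\mapsto a\otimes\epsilon$ for a finite-trace projection $\epsilon$; you instead build matrix units $\tilde e_{ij}$ by hand, which only produces the \emph{corner} $Q\M Q\simeq M_{N+1}\overline{\otimes}(\tilde e_{11}\M\tilde e_{11})$ with $Q$ non-central. That part, the trace computations, the upper $S^1$-bound obtained by pushing factorizations forward through the multiplicative map $\Gamma$, and the identity $r\circ u=\mathrm{id}$ are all fine. The difference between a corner and a direct summand is, however, exactly where your argument breaks.

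The gap is your claim that the trace-preserving conditional expectation $\E\colon Q\M Q\to\D$ (and hence the left inverse $r$) is ``directly seen to be an $S^1$-contraction from the factorization definition.'' It is not: a factorization $x_{ij}=\sum_k a_{ik}b_{kj}$ induces no factorization of $\E(x_{ij})$, since $\E$ is not multiplicative and its bimodule property $\E(dxd')=d\E(x)d'$ only holds for $d,d'\in\D$. (The compression $x\mapsto QxQ$ really is harmless as a map into $L^p(\M;S^1_m)$, because one may replace $a_{ik}$ by $Qa_{ik}$ and $b_{kj}$ by $b_{kj}Q$; but even there, identifying the resulting norm with the intrinsic $L^p(Q\M Q;S^1_m)$-norm, which you need before applying $\E$, is a further unproved step, as the new factors need not lie in $L^{2p}(Q\M Q)$.) What your argument actually requires is that normal completely positive trace-preserving maps induce $S^1$-contractions on $L^p$ --- available in Pisier's regular-operator framework for hyperfinite algebras, but $\tilde e_{11}\M\tilde e_{11}$ need not be hyperfinite, and no such general result is among the tools used in this paper or in \cite{LMZ2}. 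The paper avoids the problem entirely: since its copy of $M_{N+1}$ is a tensor factor of a direct summand, the exact value of $\|[a_{ij}\otimes\epsilon]\|_{L^p(\M;S^1_m)}$ is supplied by the Fubini-type identity of \cite[Lemma 5.1]{LMZ2}, which gives both inequalities at once. To repair your proof you would need either to establish the $S^1$-contractivity of $\E$ or to prove the corner analogue of that lemma; neither is routine.
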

\begin{proof}
Let $\M=\M_1\mathop{\oplus}\limits^\infty \M_2$ be the
direct sum decomposition of $\M$ 
into a type I summand $\M_1$ and a type 
II summand $\M_2$ (see e.g. \cite[Section 5]{Ta}). 

Assume that $\M_2\neq\{0\}$. Following the same lines as in \cite[Lemma 2.3]{LMZ2}, there is a projection $e$ in $\M_2$, a trace preserving von Neumann algebra identification 
\begin{align}\label{matrixAlg}
\M_2\simeq M_{N+1}\overline{\otimes}(e\M_2 e)
\end{align}
and a finite trace projection $\epsilon$ in $e\M_2 e$ such that the mapping 
$$\gamma:M_{N+1}\to\M_2\subset\M;\qquad\gamma(a)=a\otimes\epsilon
$$ 
is a non zero $\ast$-homomorphism taking values in $L^1(\M)$, and therefore $L^p(\M)$.

For every $[a_{ij}]_{1\leq i,j\leq m}$ in $S^p_{N+1}\otimes S^p_m$ we have that
$$
\|[a_{ij}\otimes\epsilon]\|_{L^p(\tiny\M_2\otimes M_m)}=\|\epsilon\|_p\|[a_{ij}]\|_{L^p(M_{N+1}\otimes M_m)},
$$
and therefore $\|\epsilon\|_p^{-1}\gamma$ is a complete isometry from $S^p_{N+1}$ into $L^p(\M)$. By \cite[Lemma 5.1]{LMZ2},
$$
\|[a_{ij}\otimes\epsilon]\|_{L^p(\tiny\M_2;\; S^1_m)}=\|\epsilon\|_p\|[a_{ij}]\|_{S^p_{N+1}[S^1_m]},
$$
and therefore $\|\epsilon\|_p^{-1}\gamma$ is also an $S^1$-isometry from $S^p_{N+1}$ into $L^p(\M)$.

If $\M_{2}=\{0\}$, then $\M$ is of type I. Since $\M$ is not subhomogeneous of degree $\leq N$, it follows from \cite[Theorem V.1.27]{Ta} that there exist a Hilbert space $\H$ with $\dim(\H)\geq N+1$ and an abelian von Neumann algebra $W$ such that $\M$ contains $B(\H)\overline{\otimes}W$ as a summand. Using this summand instead of \eqref{matrixAlg} and arguing as above we obtain the result in this case as well.
\end{proof}
 
%%%%%%%%%%%%%%%%%%%%%%%%%%%%%%%%%%%%%%%%%%%%%%%%%%%%%%%
\section{bijective separating maps and their inverses}
%%%%%%%%%%%%%%%%%%%%%%%%%%%%%%%%%%%%%%%%%%%%%%%%%%%%%%

The goal of this section is to provide a decomposition for bijective separating maps that facilitates their study. We apply this decomposition to show that the inverse of a bijective separating map is separating as well.\\

First we recall some terminologies and results that we will use. A Jordan homomorphism between von Neumann algebras $\M$ and $\N$ is a linear map $J:\mathcal{M}\to\mathcal{N}$ such that
	$$
	J(x^*)=J(x)^*\quad\text{and}\quad J(xy+yx)=J(x)J(y)+J(y)J(x)
	$$
	for all $x$ and $y$ in $\mathcal{M}$. It is plain that $\ast$-homomorphisms and anti-$\ast$-homomorphisms are Jordan homomorphisms. In fact, every Jordan homomorphism is a sum of a $\ast$-homomorphism and an anti-$\ast$-homomorphism, as we recall here.

Let $J:\M\to\N$ be a Jordan homomorphism and let $\D\subset\N$ be the $w^*$-closed $C^*$-algebra generated by $J(\M)$. Then $J(1)$ is the unit of $\D$. By e.g.
\cite[Theorem 3.3]{S}, there exist projections 
$e$ and $f$ in the center of $\D$ such that $e+f=J(1)$, $x \mapsto J(x)e$ is a $*$-homomorphism, and $x\mapsto J(x)f$ is an anti-$*$-homomorphism. Let $\N_1 = e\N e$ and $\N_2 = f\N f$. Define $\pi:\M \to \N_1$ and $\sigma:\M \to \N_2$ by $\pi(x) = J(x)e$ and $\sigma(x) = J(x)f$, for all $x \in \M$. Then $J$ is valued in $\N_1 \mathop{\oplus}\limits^\infty\N_2$ and $J(x) = \pi(x) + \sigma(x)$, for all $x \in \M$.\\

Assume that $\M$ and $\N$ are semifinite von Neumann algebras and let $1\leq p<\infty$. In \cite{LMZ}, inspired by Yeadon's fundamental description of isometries between noncommutative $L^p$-spaces, we say that a bounded operator $T\colon L^p(\mathcal{M})\to L^p(\mathcal{N})$ 
has a Yeadon type factorization if there exist a $w^*$-continuous 
Jordan homomorphism $J\colon\mathcal{M}\to\mathcal{N}$, 
a partial isometry $w\in\mathcal{N}$, 
and a positive operator $B$ affiliated with $\mathcal{N}$, 
which satisfy the following conditions:
\begin{enumerate}[(a)]
\item $w^{\ast}w=J(1)=s(B)$, the support projection of $B$;
\item  every spectral projection of $B$ commutes with $J(x)$, for all $x\in\mathcal{M}$;
\item $T(x)=w BJ(x)$ for all $x\in\mathcal{M}\cap L^p(\mathcal{M})$.
\end{enumerate}
We call $(w, B, J)$ the Yeadon triple associated with $T$. This triple is unique. Following \cite{LMZ2}, if $J$ is a $\ast$-homomorphism (respectively, anti-$\ast$-homomorphism), we say that $T$ has a direct (respectively, anti-direct) Yeadon type factorization.  

Following \cite{LMZ}, we say that a bounded operator $T:L^p(\M)\to L^p(\N)$ is separating if for every $x,y\in L^p(\M)$ such that $x^*y=xy^*=0$, we have that $T(x)^*T(y)=T(x)T(y)^*=0$. The following characterization has a fundamental role in the study of separating maps.
\begin{theorem}{(\hspace{-0.01cm}\cite[Theorem 3.3]{HRW}, \cite[Theorem 3.5]{LMZ})}
	A bounded operator $T\colon L^p(\mathcal{M})\to L^p(\mathcal{N})$ admits a Yeadon type factorization 
if and only if it is separating.
\end{theorem}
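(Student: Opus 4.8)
The plan is to prove the two implications separately; the implication ``Yeadon factorization $\Rightarrow$ separating'' reduces to a direct computation, while the converse is the substantial part.

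First I would treat the forward direction. Suppose $T(x)=wBJ(x)$ for a Yeadon triple $(w,B,J)$, and take $x,y\in L^p(\M)$ with $x^*y=xy^*=0$. The key algebraic observation is that a Jordan homomorphism preserves two-sided orthogonality, i.e. $J(x)^*J(y)=J(x)J(y)^*=0$. To prove this I would use the decomposition $J=\pi+\sigma$ recalled above, where $\pi\colon\M\to\N_1=e\N e$ is a $*$-homomorphism, $\sigma\colon\M\to\N_2=f\N f$ is an anti-$*$-homomorphism, and $e,f$ are orthogonal central projections of $\D$ with $e+f=J(1)$. Then $\pi(x)^*\pi(y)=\pi(x^*y)=0$, while anti-multiplicativity gives $\sigma(x)^*\sigma(y)=\sigma(x^*)\sigma(y)=\sigma(yx^*)=\sigma\bigl((xy^*)^*\bigr)=0$, and the cross terms vanish because $ef=0$ is central. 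This yields $J(x)^*J(y)=0$, and the same computation applied to the adjoint relations gives $J(x)J(y)^*=0$. Using $w^*w=J(1)=s(B)$ (so that $Bw^*wB=BJ(1)B=B^2$) together with condition (b) (so that $B^2$ commutes with $J(y)$ on the relevant spectral truncations), I would then compute
\[
T(x)^*T(y)=J(x)^*B\,w^*w\,BJ(y)=J(x)^*B^2J(y)=J(x)^*J(y)\,B^2=0,
\]
and symmetrically $T(x)T(y)^*=wB\,J(x)J(y)^*\,Bw^*=0$. Hence $T$ is separating.

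For the converse I would follow the scheme of Yeadon's structure theorem for isometries, adapted to the separating setting. First I would analyse the action of $T$ on positive elements: the separating hypothesis forces $T$ to send positive elements $a,b$ with $ab=0$ to elements with orthogonal left and right supports, and this additivity on disjoint supports is what lets one assemble the ``modulus'' data, namely a positive operator $B$ affiliated with $\N$ and a partial isometry $w\in\N$ encoding the weight and the phase of $T$. Next, working on a weak-$*$ dense $*$-subalgebra of $\M$ and dividing out $w$ and the bounded spectral truncations of $B$ on the support $s(B)$, I would define a candidate map $J$ on $\M$ by inverting the factorization, so that $T(x)=wBJ(x)$ holds by construction on $\M\cap L^p(\M)$. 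It then remains to verify that $J$ is a $w^*$-continuous Jordan homomorphism, i.e. that $J(x^2)=J(x)^2$ on self-adjoint $x$; this is exactly where the two-sided orthogonality preserved by $T$ must be converted into multiplicativity of $J$ for the Jordan product. Finally I would check the support and commutation conditions (a) and (b), together with the uniqueness of the triple, which is forced by the factorization form.

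The hard part will be the converse, and within it the step showing that the map $J$ built from the modulus data is genuinely a Jordan homomorphism and is $w^*$-continuous, rather than merely additive and orthogonality-preserving. This requires the analytic heart of Yeadon's argument: controlling $B$ as a possibly unbounded affiliated operator (hence handling domains via spectral truncations) and extracting the identity $J(x^2)=J(x)^2$ from the equality cases in the relevant $L^p$-norm estimates. Such estimates are cleanest for $p\neq2$, where strict convexity of the norm pins down the structure, and need the finer arguments of \cite{LMZ,HRW} to cover the full range $1\le p<\infty$.
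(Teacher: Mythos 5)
This statement is not proved in the paper: it is imported verbatim from \cite[Theorem 3.3]{HRW} and \cite[Theorem 3.5]{LMZ}, so there is no internal proof to compare against. Judged on its own terms, your forward direction is essentially right: the decomposition $J=\pi+\sigma$ through orthogonal central projections $e,f$ does show that a Jordan homomorphism preserves two-sided orthogonality, and the computation $T(x)^*T(y)=J(x)^*B^2J(y)=J(x)^*J(y)B^2=0$ is the standard one. Two points need attention there: the factorization $T(x)=wBJ(x)$ is only given on $\M\cap L^p(\M)$, whereas the separating condition must be verified for all $x,y\in L^p(\M)$, so you need an approximation step (e.g.\ truncating by spectral projections of $\lvert x\rvert$, $\lvert y\rvert$ and passing to the limit in $L^{p/2}$ or in measure); and the manipulations with the possibly unbounded $B$ should be justified via its bounded spectral truncations, which you only gesture at.

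The genuine gap is in the converse, and it is not merely that you defer details to the references: the mechanism you propose is the wrong one. You plan to extract $J(x^2)=J(x)^2$ from ``equality cases in the relevant $L^p$-norm estimates'' and strict convexity, which is the engine of Yeadon's theorem for \emph{isometries}. A separating map is in general not an isometry (it can even have dense non-closed range or be a small perturbation of zero on part of the algebra), so there are no norm equalities to exploit, and such an argument could not work uniformly in $1\le p<\infty$ --- note that the theorem as stated does include $p=2$, where the Clarkson-type equality analysis is vacuous. The proofs in \cite{LMZ} and \cite{HRW} instead work algebraically from the hypothesis itself: one applies $T$ to orthogonal finite-trace projections $e,f$ (for which $e^*f=ef^*=0$), uses $T(e)^*T(f)=T(e)T(f)^*=0$ together with polar decompositions $T(e)=w_e\lvert T(e)\rvert$ to build a compatible family of partial isometries and positive operators, and assembles $w$, $B$ and $J$ from these data, proving Jordan multiplicativity from the orthogonality relations rather than from any norm identity. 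As written, your plan for the converse would stall at exactly the step you identify as the hard one.
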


It is easy to see that for a separating map $T:L^p(\M)\to L^p(\N)$ with Yeadon triple $(w,B,J)$, we have that 
	\begin{align}\label{repeated2}
		T(z^*)=wT(z)^* w\qquad(z\in L^p(\M)).
	\end{align}
	
Also, if $T$ has a direct (respectively, anti-direct) Yeadon type factorization, we get that
\begin{align}\label{repeated3}
		T(zm)=T(z)J(m)\ \left(\text{respectively, } T(mz)=T(z)J(m)\right),
	\end{align}   
for every $z\in L^p(\M)\text{ and } m\in\M$. 

\begin{remark}\label{Remark1}
	Let $T:L^p(\M)\to L^p(\N)$ be a separating map with Yeadon triple $(w,B,J)$. We observe that if $T$ is surjective, then $w$ is a unitary. Indeed on the one hand, we see that $T$ is valued in $wL^p(\N)$. Since $ww^*w=w$, this implies that $T$ is valued in $ww^*L^p(\N)$. Hence, if $T$ is surjective, we have $ww^*L^p(\N)=L^p(\N)$, which implies that $ww^*=1$. On the other hand, $T(x)=T(x)J(1)$, for any $x\in L^p(\M)$. Hence, $T$ is valued in $L^p(\N)J(1)$. Hence, if $T$ is surjective, we have $L^p(\N)J(1)=L^p(\N)$, which implies $w^*w=J(1)=1$.
\end{remark}

\begin{proposition}\label{3.1}
	Let $T:L^p(\M)\to L^p(\N)$ be a separating map that is bijective. Then there exist direct sum decompositions 
	$$
	\M=\M_1\mathop{\oplus}\limits^{\infty}\M_2,\qquad\text{and}\qquad \N=\N_1\mathop{\oplus}\limits^{\infty}\N_2,
	$$
	and bounded bijective separating maps $T_1:L^p(\M_1)\to L^p(\N_1)$ with a direct Yeadon type factorization and $T_2:L^p(\M_2)\to L^p(\N_2)$ with an anti-direct Yeadon type factorization such that $T=T_1+T_2$.
\end{proposition}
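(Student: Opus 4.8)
The plan is to start from the Yeadon type factorization $T(x)=wBJ(x)$ supplied by the characterization theorem, where $(w,B,J)$ is the Yeadon triple of $T$. Since $T$ is surjective, Remark \ref{Remark1} forces $w$ to be a unitary and $J(1)=1$. I would then invoke the Jordan decomposition recalled just before the statement: with $\D$ the $w^*$-closed $C^*$-algebra generated by $J(\M)$, there are central projections $e,f\in\D$ with $e+f=J(1)=1$ such that $\pi:=J(\cdot)e$ is a $w^*$-continuous $*$-homomorphism and $\sigma:=J(\cdot)f$ is a $w^*$-continuous anti-$*$-homomorphism, and $J=\pi+\sigma$.

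\emph{First key step: $e,f$ are central in $\N$.} A priori $e,f$ are central only in $\D$, so the corners $e\N e$ and $f\N f$ need not be summands of $\N$. To upgrade this I would replace $T$ by $S:=w^*T$, which is again separating and surjective with Yeadon triple $(1,B,J)$, so that $S(x)=BJ(x)$. Since $B$ commutes with $\D\ni e,f$, one checks $S(x)e=B\pi(x)\in eL^p(\N)e$ and $S(x)f=B\sigma(x)\in fL^p(\N)f$, so the range of $S$ lies in $eL^p(\N)e\mathop{\oplus}\limits^{p}fL^p(\N)f$. Surjectivity then forces equality inside the Peirce decomposition $L^p(\N)=eL^p(\N)e\oplus eL^p(\N)f\oplus fL^p(\N)e\oplus fL^p(\N)f$, whence $eL^p(\N)f=0$. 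By normality and $w^*$-density of $\N\cap L^p(\N)$ this yields $e\N f=0$, and combined with $e+f=1$ a direct computation ($en=ene=ne$ for all $n\in\N$) shows $e\in Z(\N)$. Thus $e,f\in Z(\N)$, giving a genuine decomposition $\N=\N_1\mathop{\oplus}\limits^{\infty}\N_2$ with $\N_1=e\N$, $\N_2=f\N$; in particular $w$ commutes with $e$ and $f$.

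\emph{Second key step: decomposing the domain.} As $\pi,\sigma$ are normal, their kernels are $w^*$-closed two-sided ideals $\M(1-z_1)$, $\M(1-z_2)$ for central projections $z_1,z_2\in\M$, and they are faithful on $\M z_1$, $\M z_2$ respectively. Writing $T_1'(x):=T(x)e=wB\pi(x)$ and $T_2'(x):=T(x)f=wB\sigma(x)$, one has $T=T_1'+T_2'$ with $T_i'$ valued in $L^p(\N_i)$ and, since $B$ is injective ($s(B)=1$), $\ker T_1'=L^p(\M(1-z_1))$, $\ker T_2'=L^p(\M(1-z_2))$. The heart of the argument is to prove $z_1+z_2=1$ and $z_1z_2=0$ from bijectivity of $T$. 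Surjectivity of $T$ onto $L^p(\N_1)\mathop{\oplus}\limits^{p}L^p(\N_2)$ together with injectivity of $T_1'$ on $L^p(\M z_1)$ shows that $T_2'$ maps $L^p(\M(1-z_1))$ \emph{bijectively} onto $L^p(\N_2)$; injectivity of this restriction forces $(1-z_1)(1-z_2)=0$, i.e. $z_1\vee z_2=1$. For disjointness I take $x\in L^p(\M z_1z_2)$, use surjectivity of $T_2'|_{L^p(\M(1-z_1))}$ to find $x''\in L^p(\M(1-z_1))$ with $T_2'(x'')=T_2'(x)$, so $x-x''\in\ker T_2'=L^p(\M(1-z_2))$, and then multiply the identity $x=x''+(x-x'')$ on the right by the central projection $z_1z_2$ to conclude $x=0$; hence $z_1z_2=0$.

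Setting $\M_1=\M z_1$, $\M_2=\M z_2$ gives $\M=\M_1\mathop{\oplus}\limits^{\infty}\M_2$. Since $z_1\leq 1-z_2$ and $z_2\leq 1-z_1$, the map $T_2'$ vanishes on $L^p(\M_1)$ and $T_1'$ on $L^p(\M_2)$, so $T_1:=T_1'|_{L^p(\M_1)}$ and $T_2:=T_2'|_{L^p(\M_2)}$ satisfy $T=T_1+T_2$. Each $T_i$ is bijective onto $L^p(\N_i)$ and carries the Yeadon triple $(we,Be,\pi)$, respectively $(wf,Bf,\sigma)$, where $we,wf$ are unitaries in $\N_1,\N_2$; thus $T_1$ has a direct and $T_2$ an anti-direct Yeadon type factorization, and both are separating by the characterization theorem. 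I expect the main obstacle to be exactly the two centrality/complementarity statements — that $e,f$ lie in $Z(\N)$ and that $z_1,z_2$ are complementary in $\M$ — since these are precisely where surjectivity (as opposed to mere separation) is indispensable; the disjointness $z_1z_2=0$, which rules out a genuinely ``mixed'' central summand on which $J$ is simultaneously a nonzero $*$- and anti-$*$-homomorphism, is the most delicate point.
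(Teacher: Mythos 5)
Your proposal is correct and follows essentially the same route as the paper: decompose $J=\pi+\sigma$ via the central projections $e,f$ of the $w^*$-closed algebra generated by $J(\M)$, set $T_1'=T(\cdot)e$ and $T_2'=T(\cdot)f$, identify their kernels with the $L^p$-spaces of $w^*$-closed ideals of $\M$, and use bijectivity of $T$ to show these ideals are complementary. Two remarks. First, your explicit verification that $e,f$ are central in $\N$ (via the Peirce decomposition of $L^p(\N)$ and surjectivity of $w^*T$) is a step the paper delegates to \cite[Remark 4.3]{LMZ2}; spelling it out is worthwhile, since the genuine direct sum decomposition $\N=\N_1\mathop{\oplus}\limits^{\infty}\N_2$ asserted in the statement really does require surjectivity and not just separation. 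Second, the one step you pass over too quickly is the identification $\ker T_2'=L^p(\M(1-z_2))$: injectivity of $B$ only settles this for $x\in\M\cap L^p(\M)$, where $\sigma(x)$ is defined, whereas for a general $x\in L^p(\M)$ one needs the truncation argument the paper supplies (set $p_n=\chi_{[-n,n]}(\abs{x^*})$, use \eqref{repeated3} to get $T_2'(p_nx)=T_2'(x)\sigma(p_n)=0$, deduce $p_nx\in\ker\sigma$, and let $n\to\infty$). Once that routine approximation is inserted, your two complementarity arguments — $z_1\vee z_2=1$ and $z_1z_2=0$, both extracted from surjectivity and injectivity of $T$, where the paper instead gets one of the two from injectivity of $J$ — are valid, and the final assembly of the Yeadon triples $(we,Be,\pi)$ and $(wf,Bf,\sigma)$ matches the paper's conclusion.
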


\begin{proof}
	Assume that $w=1$. Consider a decomposition for $J$, induced by central projections $e$ and $f$, as recalled above. As detailed in \cite[Remark 4.3]{LMZ2}, this induces a decomposition $\N=\N_1\mathop{\oplus}\limits^\infty\N_2$ and  separating maps
	$$
T_1\colon L^p(\M)\longrightarrow L^p(\N_1),\qquad T_1(x)= T(x)e,
$$
with Yeadon triple $(e, Be, \pi),$ and hence a direct Yeadon type factorization, and  
$$
T_2\colon L^p(\M)\longrightarrow L^p(\N_2),\qquad T_2(x)= T(x)f,
$$
with Yeadon triple  $(f, Bf, \sigma)$, and hence an anti-direct Yeadon type factorization, such that $T=T_1+T_2$.

Let $\M_1:=\ker(\sigma)$ and $\M_2:=\ker(\pi)$. Since $\M_1$ and $\M_2$ are $w^*$-closed ideals of $\M$, there exist central projections $\alpha,\beta\in\M$ such that $\M_1=\alpha\M$, and $\M_2=\beta\M$. Set $\M_3:=(1-\alpha)(1-\beta)\M$. Note that $\alpha\beta\in\ker(\sigma)\cap\ker(\pi)$, and therefore  $J(\alpha\beta)=0$. Since $T$ is one-to-one, by \cite[Remark 3.14(a)]{LMZ}, $J$ is one-to-one and therefore we must have that $\alpha\beta=0$. Hence, 
	$$
	1=\alpha+\beta+(1-\alpha)(1-\beta).
	$$
Consequently, $\M=\M_1\mathop{\oplus}\limits^{\infty}\M_2\mathop{\oplus}\limits^{\infty}\M_3$, and so we have the decomposition 
\begin{align*}
L^p(\M)=L^p(\M_1)\mathop{\oplus}\limits^p L^p(\M_2)\mathop{\oplus}\limits^p L^p(\M_3).
\end{align*}

The result will follow if we can show that 
\begin{align*}
L^p(\M_1)=\ker(T_2),\quad L^p(\M_2)=\ker(T_1)\quad\text{and}\quad \M_3=\{0\}.
\end{align*}

To see that $L^p(\M_1)\subseteq \ker(T_2)$, let $x\in\M_1\cap L^p(\M_1)$, then 
$$
T_2(x)=B\sigma(x)=0.
$$
Hence, $\M_1\cap L^p(\M_1)\subset \ker(T_2)$ and therefore $L^p(\M_1)\subset \ker(T_2)$. Now suppose that $x$ belongs to $\ker(T_2)$. For any $n\geq1$, let $p_n=\chi_{[-n,n]}(\abs{x^*})$, the projection associated with the indicator function of $[-n,n]$ in the Borel functional calculus of $\abs{x^*}$, and $x_n:=p_nx$. Then, using \eqref{repeated3}, we have that 
$$
T_2(x_n)=T_2(x) \sigma(p_n)=0.
$$
Hence, $B\sigma(x_n)=0$. Since $s(B)=1$, this implies that $\sigma(x_n)=0$, that is $x_n$ is in $\M_1$. Now because $x_n\to x$ in $L^p(\M)$, we obtain that $x$ belongs to $L^p(\M_1)$. Hence,
$$
L^p(\M_1)=\ker(T_2).
$$
Similarly, we can show that $L^p(\M_2)=\ker(T_2)$. 

Finally, we show that $\M_3=\{0\}$. Let $x\in L^p(\M)$. By surjectivity of $T$, there is $y$ in $L^p(\M)$ such that $T(y)=T_1(x)$. Writing $T(y)=T_1(y)+T_2(y)$, we obtain that $T_1(x-y)=0$ and $T_2(y)=0$, that is $x-y$ belongs to $\ker(T_1)=L^p(\M_2)$ and $y$ belongs to $\ker(T_2)=L^p(\M_1)$, thus $x$ belongs to  $L^p(\M_1)\mathop{\oplus}\limits^p L^p(\M_2)$. Hence, $\M_3=\{0\}$. This completes the proof in the case $w=1$.

In the general case, consider the map $\widetilde{T}:= w^*T(\cdot)$, which takes any $x\in\M\cap L^p(\M)$ to $BJ(x)$. By Remark \ref{Remark1}, $\widetilde{T}$ is also a bijective separating map. Its Yeadon triple is $(1,B,J)$. We may apply the above decomposition to the map $\widetilde{T}$ to obtain decompositions $\M=\M_1\mathop{\oplus}\limits^\infty\M_2$, $\N=\N_1\mathop{\oplus}\limits^\infty\N_2$ and bounded bijective separating maps $\widetilde{T_1}:L^p(\M_1)\to L^p(\N_1)$ with a direct Yeadon type factorization and $\widetilde{T_2}:L^p(\M_2)\to L^p(\N_2)$ with an anti-direct Yeadon type factorization such that $\widetilde{T}=\widetilde{T_1}+\widetilde{T_2}$. Since $w\widetilde{T}=T$, we obtain the result.
\end{proof}
%%%%%%%%%%%%%%%%%%%%%%%%%%%%

\begin{proposition}\label{lastprop}
Suppose that $T:L^p(\M) \to L^p(\N)$ is a bijective separating map, then
\begin{itemize}
	\item[(i)]  $T^{-1}:L^p(\N)\to L^p(\M)$ is separating.
	\item[(ii)] If $J:\M\to\N$ is the Jordan homomorphism associated with $T$, then $J$ is invertible and $J^{-1}:\N\to\M$ is the Jordan homomorphism associated with $T^{-1}$. 
\end{itemize} 
\end{proposition}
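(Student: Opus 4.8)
The plan is to reduce to the two ``pure'' cases by means of Proposition \ref{3.1} and then to read off both statements from the way a Yeadon factorization transforms support projections. Since $T$ is bijective it is in particular surjective, so by Remark \ref{Remark1} the partial isometry $w$ in its Yeadon triple $(w,B,J)$ is a unitary and $J(1)=s(B)=1$. Applying Proposition \ref{3.1} I obtain orthogonal decompositions $\M=\M_1\mathop{\oplus}\limits^\infty\M_2$, $\N=\N_1\mathop{\oplus}\limits^\infty\N_2$ and bijective separating maps $T_1\colon L^p(\M_1)\to L^p(\N_1)$ (direct) and $T_2\colon L^p(\M_2)\to L^p(\N_2)$ (anti-direct) with $T=T_1+T_2$, the associated Jordan homomorphism restricting to a $\ast$-homomorphism $J_1\colon\M_1\to\N_1$ and an anti-$\ast$-homomorphism $J_2\colon\M_2\to\N_2$ with $J=J_1\oplus J_2$. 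The key structural remark is that the condition $u^*v=uv^*=0$ is equivalent to orthogonality of both the left supports, $l(u)l(v)=0$, and the right supports, $r(u)r(v)=0$, and that these supports split along an orthogonal direct sum. Consequently both conclusions are compatible with $T=T_1+T_2$, and it suffices to treat a bijective separating map with a direct (respectively anti-direct) Yeadon factorization and then reassemble.

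Consider the direct case, $T(x)=wBJ(x)$ with $J$ a normal $\ast$-homomorphism, injective by \cite[Remark 3.14]{LMZ}, $w$ unitary, $B\geq0$ affiliated with $\N$ and $s(B)=1$. Extending $J$ to support projections by normality and using $s(B)=1$ together with the fact that $B$ commutes with $J(x)$, I would first record the support laws
\[
r(T(x))=J(r(x)),\qquad l(T(x))=wJ(l(x))w^*\qquad(x\in L^p(\M)).
\]
Statement (i) is then immediate: if $T(x)$ and $T(y)$ are disjoint, then $J(r(x))J(r(y))=0$, and after cancelling $w$ the left supports give $J(l(x))J(l(y))=0$; since $J$ is a multiplicative injective map these force $r(x)r(y)=0$ and $l(x)l(y)=0$, i.e.\ $x$ and $y$ are disjoint. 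Thus $T^{-1}$ is separating. The anti-direct case is entirely analogous after interchanging the roles of the left and right supports and of the factors in each product, using that $J_2$ reverses products but still sends projections to projections and orthogonal projections to orthogonal projections.

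For (ii) the decisive point is to upgrade $J$ from an injective $\ast$-homomorphism to a $\ast$-isomorphism onto $\N$, and this is exactly where surjectivity of $T$ is used. Every finite trace projection $q\in\N$ lies in $L^p(\N)$, so $q=T(x)$ for some $x\in L^p(\M)$, and the support law gives $q=r(q)=r(T(x))=J(r(x))\in J(\M)$. As the finite trace projections generate the semifinite algebra $\N$ and $J(\M)$ is a von Neumann subalgebra (being the image of a normal homomorphism), I conclude $J(\M)=\N$, so $J$ is a normal $\ast$-isomorphism; in particular $B$ commutes with $J(\M)=\N$ and is therefore affiliated with the centre of $\N$. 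A direct computation, using that $J^{-1}$ sends central affiliated operators to central affiliated operators and is multiplicative, shows that the triple
\[
\bigl(J^{-1}(w)^*,\; J^{-1}(wB^{-1}w^*),\; J^{-1}\bigr)
\]
satisfies the Yeadon axioms and defines a separating map $S$ with $S\circ T=\mathrm{id}$ on $\M\cap L^p(\M)$; by density and bijectivity $S=T^{-1}$, so by uniqueness of the Yeadon triple the Jordan homomorphism of $T^{-1}$ is $J^{-1}$ (this also re-proves (i) via the characterization recalled above). The anti-direct case is identical up to anti-multiplicativity, and reassembling the two summands shows that $J=J_1\oplus J_2$ is invertible with $J^{-1}=J_1^{-1}\oplus J_2^{-1}$ the Jordan homomorphism of $T^{-1}=T_1^{-1}+T_2^{-1}$.

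The main obstacle I anticipate is twofold. First, making the support laws $r(T(x))=J(r(x))$ and $l(T(x))=wJ(l(x))w^*$ rigorous for \emph{all} $x\in L^p(\M)$, rather than only for $x\in\M\cap L^p(\M)$, which requires passing from the factorization on the dense subalgebra to the extension of $J$ to $\tau$-measurable operators and exploiting $s(B)=1$ to control the supports. Second, the surjectivity of $J$, which is the heart of (ii) and the only place where the surjectivity hypothesis on $T$ is genuinely needed. Once $J$ is known to be a Jordan isomorphism, the identification of the Yeadon triple of $T^{-1}$, and hence both (i) and (ii), is essentially forced by uniqueness of the factorization.
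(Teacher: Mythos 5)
Your argument is a genuinely different route from the paper's, and in outline it works, but the load-bearing step is one you only flag rather than prove. The paper's proof of (i) deliberately avoids support projections altogether: after the same reduction via Proposition \ref{3.1}, it takes $x=T^{-1}(a)$, $y=T^{-1}(b)$, truncates with $p_n=\chi_{[-n,n]}(\abs{y})$, and computes $T(x^*yp_n)B=wa^*bJ(p_n)=0$ using only the algebraic identities \eqref{repeated2} and \eqref{repeated3}; then $s(B)=1$ and injectivity of $T$ give $x^*yp_n=0$, and one lets $n\to\infty$. For (ii) the paper again leans on (i): since $T^{-1}$ is now known to be separating it comes with its own Jordan homomorphism $J'$, and the identity $JJ'=\mathrm{id}_{\N}$ is checked on each $e\N e$ with $\tau_{\N}(e)<\infty$ via \eqref{repeated3} and then extended by $w^*$-continuity. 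Your version instead proves the support laws $r(T(x))=J(r(x))$, $l(T(x))=wJ(l(x))w^*$, derives (i) from injectivity and multiplicativity of $J$, and for (ii) shows directly that $J$ is surjective (finite-trace projections lie in $J(\M)$) and writes down the Yeadon triple of $T^{-1}$ explicitly. What your approach buys is more structural information -- $J$ is a $*$-isomorphism, $B$ is affiliated with the centre of $\N$, and the inverse triple is exhibited -- and your (ii) does not logically depend on (i). What the paper's approach buys is economy: it never needs to extend $J$ or $J^{-1}$ to measurable operators, and it never needs support identities for unbounded elements.

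The concrete point you must still supply is the equality $r(T(x))=J(r(x))$ (and its left-handed companion) for \emph{all} $x\in L^p(\M)$, not merely $x\in\M\cap L^p(\M)$; your proof of (i) genuinely needs the inequality $J(r(x))\leq r(T(x))$, and this direction does not follow from naive norm approximation, since supports are not continuous under $L^p$-convergence. A clean way to close it: with $e_n=\chi_{[0,n]}(\abs{x})$ and $x_n=xe_n\in\M\cap L^p(\M)$, one has $T(x_n)^*T(x_n)=B^2J(x_n^*x_n)$, an increasing sequence of positive operators converging to $T(x)^*T(x)$ in $L^{p/2}$, and the support of such a limit is the supremum of the supports, which by normality of $J$ equals $J(r(x))$. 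A similar remark applies to the verification that your candidate triple $\bigl(J^{-1}(w)^*,\,J^{-1}(B^{-1}),\,J^{-1}\bigr)$ actually represents $T^{-1}$: the composition $T\circ S=\mathrm{id}$ must be checked on $\N\cap L^p(\N)$ using the extension of $J$ to affiliated operators (or, as in the paper, on the subspaces $e\N e$ with $\tau_{\N}(e)<\infty$ followed by density). Both gaps are fillable, but they are precisely the technicalities the published proof is engineered to sidestep.
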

\begin{proof}
Using the decomposition given in Proposition \ref{3.1}, it is enough to show parts $(i)$ and $(ii)$ for a bijective separating map with a direct Yeadon type  factorization. So, throughout the proof we assume that this is the case. Note that by Remark \ref{Remark1}, $J(1)=1$. 

$(i)$ Suppose that $a,b\in L^p(\N)$ such that $a^*b=ab^*=0$. We show that $T^{-1}(a)^*T^{-1}(b)=T^{-1}(a)T^{-1}(b)^*=0$. Let $x=T^{-1}(a)$ and $y=T^{-1}(b)$. Set $p_n:=\chi_{[-n,n]}\left(\abs{y}\right)$, for any $n\geq1$. We have that 
\begin{align*}
T(x^*yp_n)B&=T(x^*)J(yp_n)B&\text{ by }(\ref{repeated3}) \\
&=	T(x^*)w^*T(yp_n)&\\
&=wT(x)^*T(yp_n)&\text{ by }(\ref{repeated2}) \\
&=wT(x)^*T(y)J(p_n)&\text{ by }(\ref{repeated3}) \\
&=wa^*bJ(p_n)=0.&
\end{align*}
	Since $s(B)=J(1)=1$, we obtain $T(x^*yp_n)=0$. Because $T$ is one-to-one, we have that $x^*yp_n=0$. Now, since $yp_n\to y$, we get that $x^*y=0$. A similar argument using $ab^*=0$ implies that $xy^*=0$. Hence $T^{-1}$ must be separating. \\

$(ii)$ By part $(i)$, $T^{-1}$ is separating. We let $J^{\prime}$ denote the Jordan homomorphism of its Yeadon triple. Let $e\in\N$ be a projection with finite trace. For any $y\in e\N e$, we have that $T^{-1}(y)=T^{-1}(e)J^{\prime}(y)$. Applying \eqref{repeated3}, we deduce that 
$$
y=TT^{-1}(y)=T\left(T^{-1}(e)J^{\prime}(y)\right)=TT^{-1}(e)\;JJ^{\prime}(y)= e\; JJ^{\prime}(y).
$$ Using the $w^*$-continuity of $J$ and $J^{\prime}$, and the $w^*$-density of the union of the $e\N e$, for $\tau_{\tiny\N}(e)<\infty$, we deduce that $y=JJ^{\prime}(y)$ for any $y\in \N$. By \cite[Remark 3.14(a)]{LMZ}, since $T$ is one-to-one, $J$ must be one-to-one. Hence, $J$ is invertible with $J^{-1}= J^{\prime}$. 
\end{proof}
\begin{remark}\label{remarkJ}
Part $(ii)$ of Proposition \ref{lastprop} shows that a separating invertible map $T:L^p(\M) \to L^p(\N)$ admits a direct Yeadon type factorization if and only if $T^{-1}$ does.	
\end{remark}

%%%%%%%%
\section{a characterization of completely/$S^1$-bounded surjective separating maps} 
%%%%%%%%%%%%%%%%%%%%%%%%%%%%%
In this section we show that a separating map can always be reduced to a one-to-one separating map and therefore we may confine ourself to the study of separating maps that are surjective rather than bijective. The goal of the section is to provide a characterization for surjective separating maps that are completely bounded (when $p\neq2$) or $S^1$-bounded. We show that the surjectivity assumption is essential.\\    

We require \cite[Propositions 4.4 \& 4.5]{LMZ2} later on in our arguments in this section. We recall the statements for convenience.  
\begin{proposition}\label{4.4}
Let $T\colon L^p(\M)\to L^p(\N)$ be a bounded operator with 
a direct Yeadon type factorization. Then $T$ is completely 
bounded and $\|T\|_{cb}=\|T\|$. 
\end{proposition}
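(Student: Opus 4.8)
The plan is to prove both assertions at once by showing that $\|T\otimes I_{S^p_m}\|=\|T\|$ for every $m\geq 1$: since $\|T\|\leq\|T\|_{cb}$ holds trivially and $\|T\|_{cb}=\sup_m\|T\otimes I_{S^p_m}\|$, this gives complete boundedness together with the equality of norms. The key is that $\|T\|$ admits a trace formula which is manifestly stable under tensoring with $S^p_m$.

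Write $(w,B,J)$ for the Yeadon triple of $T$, so that $J$ is a $w^*$-continuous $*$-homomorphism, $w^*w=J(1)=s(B)$, every spectral projection of $B$ commutes with $J(\M)$, and $T(x)=wBJ(x)$ on $\M\cap L^p(\M)$. Since $s(B)B=B$, we have $w^*w\,BJ(x)=BJ(x)$, hence $|T(x)|=|wBJ(x)|=|BJ(x)|$. Because $J$ is a $*$-homomorphism, $B$ commutes with $J(x^*)=J(x)^*$ as well, so using condition (b) I would compute $|BJ(x)|=B\,|J(x)|=B\,J(|x|)$, whence
\begin{equation*}
\|T(x)\|_p^p=\tau_\N\bigl((BJ(|x|))^p\bigr)=\tau_\N\bigl(B^pJ(|x|^p)\bigr),
\end{equation*}
the last step using that $B$ and $J(|x|)$ commute and that $J$ is multiplicative. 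As $x$ runs over the positive part of $\M\cap L^p(\M)$, the element $h:=|x|^p$ runs exactly over the positive part of $\M\cap L^1(\M)$ (take $x=h^{1/p}$), so
\begin{equation*}
\|T\|^p=\sup\bigl\{\tau_\N(B^pJ(h)) : h\in\M\cap L^1(\M),\ h\geq 0,\ \tau_\M(h)\leq 1\bigr\}.
\end{equation*}

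Next I would note that $T\otimes I_{S^p_m}$ is again separating with a direct Yeadon type factorization, its triple being $(w\otimes 1,\,B\otimes 1,\,J\otimes I_{M_m})$, where $J\otimes I_{M_m}\colon\M\otimes M_m\to\N\otimes M_m$ is a $w^*$-continuous $*$-homomorphism. Applying the formula above on $\M\otimes M_m$ and expanding in matrix units, a positive $H=[h_{ij}]\in(\M\otimes M_m)\cap L^1(\M\otimes M_m)$ contributes
\begin{equation*}
\tau_{\N\otimes M_m}\bigl((B^p\otimes 1)(J\otimes I_{M_m})(H)\bigr)=\sum_{i=1}^m\tau_\N\bigl(B^pJ(h_{ii})\bigr)=\tau_\N\bigl(B^pJ(h)\bigr),\qquad h:=\sum_{i=1}^m h_{ii},
\end{equation*}
while $\tau_{\M\otimes M_m}(H)=\tau_\M(h)$. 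Thus $\|T\otimes I_{S^p_m}\|^p$ is the supremum of $\tau_\N(B^pJ(h))$ as $h$ ranges over the partial traces of positive elements of $(\M\otimes M_m)\cap L^1$ with trace $\leq 1$. Since the partial trace $H\mapsto\sum_i h_{ii}$ carries the positive trace-ball of $(\M\otimes M_m)\cap L^1(\M\otimes M_m)$ onto that of $\M\cap L^1(\M)$ (surjectivity witnessed by $h\mapsto h\otimes e_{11}$), the two suprema coincide, giving $\|T\otimes I_{S^p_m}\|=\|T\|$ and hence $\|T\|_{cb}=\|T\|$.

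The step I expect to be the main obstacle is the rigorous justification of the identity $|BJ(x)|=BJ(|x|)$ and of the trace formula when $B$ is only affiliated with $\N$ (hence possibly unbounded) and the operators involved are unbounded. This calls for working with the spectral truncations $\chi_{[0,n]}(B)$ and $\chi_{[-n,n]}(|x|)$, invoking condition (b) to keep all the factors commuting, and then passing to the limit by normality of $\tau_\N$ together with the monotone convergence available for the trace. The remaining ingredients — identifying the Yeadon triple of $T\otimes I_{S^p_m}$ and verifying surjectivity of the partial trace on trace-balls — are routine.
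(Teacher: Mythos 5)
The paper does not actually prove this proposition here: it is quoted verbatim from \cite[Proposition 4.4]{LMZ2}, and the argument there is essentially the one you give --- use condition (b) and the multiplicativity of $J$ to get $|T(x)|^p=B^pJ(|x|^p)$, observe that $T\otimes I_{S^p_m}$ has the direct Yeadon triple $(w\otimes 1,\,B\otimes 1,\,J\otimes I_{M_m})$ so the same identity holds at the matrix level, and conclude via the trace-preserving partial trace. Your proof is correct, and you have accurately isolated the only delicate point, namely justifying $|BJ(x)|=BJ(|x|)$ and the trace identities for unbounded $B$, which is handled exactly as you propose by spectral truncation and normality of the traces.
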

\begin{proposition}\label{4.5}
Let $T\colon L^p(\M)\to L^p(\N)$ be a bounded operator with 
a direct Yeadon type factorization. Then $T$ is $S^1$-bounded and $\|T\|_{S^1}=\|T\|$.
\end{proposition}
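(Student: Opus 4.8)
The plan is as follows. Since $L^p(\M;S^1_1)=L^p(\M)$ isometrically, one automatically has $\norm{T}\leq\norm{T}_{S^1}$, so the entire content of the statement is the reverse bound $\norm{T}_{S^1}\leq\norm{T}$ (which then also forces $\norm{T}_{S^1}=\norm{T}$). Write the direct Yeadon factorization as $T(x)=wBJ(x)$, where $J\colon\M\to\N$ is a normal $\ast$-homomorphism, $B\geq0$ is affiliated with $\N$ with $s(B)=J(1)=w^*w=:q$, and every spectral projection of $B$ commutes with $J(\M)$; by continuity the identity $T(x)=wB\widetilde{J}(x)$ persists for all $x\in L^p(\M)$, where $\widetilde{J}$ is the extension of $J$ to measurable operators. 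I would fix $m\geq1$ and a matrix $[x_{ij}]\in L^p(\M;S^1_m)$, transport a near-optimal factorization of it through $T$, and estimate the two resulting square functions purely in terms of $\norm{T}$.

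The central construction is to promote $J$ to the fractional level via the density $B$. First I would check that $\Phi(a):=B^{1/2}\widetilde{J}(a)$ defines a bounded map $\Phi\colon L^{2p}(\M)\to L^{2p}(\N)$ with $\norm{\Phi}\leq\norm{T}^{1/2}$: since $\widetilde{J}$ is multiplicative and $\ast$-preserving and its range commutes with every bounded Borel function of $B$, one computes $\abs{\Phi(a)}^2=\widetilde{J}(a^*a)B$, and since $w$ is a partial isometry with initial projection $q$ one gets $\norm{\Phi(a)}_{2p}^2=\norm{\widetilde{J}(a^*a)B}_p=\norm{T(a^*a)}_p\leq\norm{T}\,\norm{a}_{2p}^2$. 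The same commutation yields the three identities I will actually use: $T(ab)=[w\Phi(a)]\,[\Phi(b)]$ for $a,b\in L^{2p}(\M)$; and, for positive $P,Q\in L^p(\M)$, both $\norm{wB^{1/2}\widetilde{J}(P)B^{1/2}w^*}_p=\norm{T(P)}_p$ and $\norm{\widetilde{J}(Q)B}_p=\norm{T(Q)}_p$, each obtained by commuting $B^{1/2}$ through the range of $\widetilde{J}$ and using that $w$ preserves the $L^p$-norm on the range of $q$.

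With these in hand the estimate assembles quickly. Given a factorization $x_{ij}=\sum_k a_{ik}b_{kj}$ with $a_{ik},b_{kj}\in L^{2p}(\M)$ and $P:=\sum_{i,k}a_{ik}a_{ik}^*$, $Q:=\sum_{k,j}b_{kj}^*b_{kj}$ converging in $L^p(\M)$, I set $A_{ik}:=w\Phi(a_{ik})$ and $B_{kj}:=\Phi(b_{kj})$; the product identity gives $T(x_{ij})=\sum_k A_{ik}B_{kj}$, an admissible factorization of $[T(x_{ij})]$. The $\ast$-homomorphism property of $\widetilde{J}$ collapses the two square functions to $\sum_{i,k}A_{ik}A_{ik}^*=wB^{1/2}\widetilde{J}(P)B^{1/2}w^*$ and $\sum_{k,j}B_{kj}^*B_{kj}=\widetilde{J}(Q)B$, whose $L^p$-norms are $\norm{T(P)}_p\leq\norm{T}\norm{P}_p$ and $\norm{T(Q)}_p\leq\norm{T}\norm{Q}_p$ by the identities above. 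Feeding this into the definition of the $S^1$-norm gives $\norm{[T(x_{ij})]}_{L^p(\tiny\N;S^1_m)}\leq\norm{T(P)}_p^{1/2}\norm{T(Q)}_p^{1/2}\leq\norm{T}\,(\norm{P}_p\norm{Q}_p)^{1/2}$; taking the infimum over all factorizations of $[x_{ij}]$ yields $\norm{[T(x_{ij})]}_{L^p(\tiny\N;S^1_m)}\leq\norm{T}\,\norm{[x_{ij}]}_{L^p(\tiny\M;S^1_m)}$ for every $m$, i.e. $\norm{T}_{S^1}\leq\norm{T}$.

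The main obstacle is not this bookkeeping but the rigorous construction underlying it: a normal $\ast$-homomorphism need not preserve any $L^r$-norm, so $\widetilde{J}$ has to be built directly on the $\ast$-algebra of measurable operators, and one must verify that it remains multiplicative and $\ast$-preserving there, that its range still commutes with the spectral projections of $B$ (equivalently, that $\widetilde{J}$ is affiliated with the commutant of $B$), and that the products $B^{1/2}\widetilde{J}(a)$ genuinely land in $L^{2p}$ with the stated control. A secondary technical point is the convergence of the sums over the inner index $k$ (the matrices lie in $S^1_m$, so $i,j$ range over a finite set but $k$ does not), which must be checked to be compatible with the manipulations of $\widetilde{J}$ and $B$; both are routine once the extension of $J$ is properly set up, but they are where the real work lies.
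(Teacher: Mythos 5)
This proposition is not proved in the paper at all: it is recalled verbatim from \cite[Proposition 4.5]{LMZ2}, so there is no in-text argument to compare against. Your proposal is nonetheless a correct proof and follows the route one would expect the cited proof to take: factor $T$ at the level $2p$ via $\Phi(a)=B^{1/2}\widetilde{J}(a)$, push an admissible factorization $x_{ij}=\sum_k a_{ik}b_{kj}$ through $T(ab)=w\Phi(a)\Phi(b)$, and collapse the two square functions using that $J$ is a $*$-homomorphism whose range commutes with $B$; the identities $\norm{\Phi(a)}_{2p}^2=\norm{T(a^*a)}_p$ and $\norm{\sum_{i,k}A_{ik}A_{ik}^*}_p=\norm{T(P)}_p$, $\norm{\sum_{k,j}B_{kj}^*B_{kj}}_p=\norm{T(Q)}_p$ all check out. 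The one obstacle you flag (building $\widetilde{J}$ on measurable operators) can be sidestepped entirely: define $\Phi$ only on the dense subspace $\M\cap L^{2p}(\M)$ by $\Phi(a)=B^{1/2}J(a)$, where your computation already gives $\norm{\Phi(a)}_{2p}^2=\norm{T(a^*a)}_p\leq\norm{T}\,\norm{a}_{2p}^2$, and extend by continuity; since multiplication is jointly continuous from $L^{2p}\times L^{2p}$ to $L^p$ and $y\mapsto BJ(y)=w^*T(y)$ is bounded on $L^p$, the identities $T(ab)=w\Phi(a)\Phi(b)$ and the two square-function formulas persist under the limits over the inner index $k$ and over partial sums of $P$ and $Q$. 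With that adjustment the argument is complete.
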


\begin{lemma}\label{4.1}
	Let $T:L^p(\M)\to L^p(\N)$ be a separating map. Then there exists a direct sum decomposition 
	$
	\M=\M_0\mathop{\oplus}\limits^{\infty}\widetilde{\M}
	$
such that $\ker{(T)}=L^p(\M_0)$.
\end{lemma}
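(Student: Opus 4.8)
The plan is to use the Yeadon type factorization of $T$ together with the splitting of its Jordan homomorphism into a $*$-homomorphism part and an anti-$*$-homomorphism part, and then to identify the kernel of each part separately. Let $(w,B,J)$ be the Yeadon triple of $T$, so that $T(x)=wBJ(x)$ for $x\in\M\cap L^p(\M)$. First I would decompose $J$ as in the paragraph preceding Proposition \ref{3.1}: there are central projections $e,f$ in the $w^*$-closed $C^*$-algebra $\D$ generated by $J(\M)$, with $e+f=J(1)$, such that $\pi:=J(\cdot)e$ is a $w^*$-continuous $*$-homomorphism and $\sigma:=J(\cdot)f$ is a $w^*$-continuous anti-$*$-homomorphism. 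As recalled there (see also \cite[Remark 4.3]{LMZ2}), this construction applies to any separating $T$: setting $T_1(x)=T(x)e$ and $T_2(x)=T(x)f$ produces separating maps with Yeadon triples $(we,Be,\pi)$ and $(wf,Bf,\sigma)$, the first with a direct and the second with an anti-direct Yeadon type factorization, with $T=T_1+T_2$.

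Since $ef=0$ and each $T_i(x)$ is obtained from $T(x)$ by right multiplication by a projection, it is immediate that $T(x)=0$ if and only if $T_1(x)=0$ and $T_2(x)=0$; hence $\ker T=\ker T_1\cap\ker T_2$. Next I would compute $\ker T_1$ and $\ker T_2$ individually. This is precisely the computation already carried out inside the proof of Proposition \ref{3.1}, and the key point is that it never uses bijectivity of $T$: using the module identity \eqref{repeated3} for the direct (resp. anti-direct) factorization together with the truncations $xq_n$ (resp. $q_nx$), where $q_n=\chi_{[0,n]}(\abs{x})$ (resp. $q_n=\chi_{[0,n]}(\abs{x^*})$), one reduces to elements $x_n\in\M\cap L^p(\M)$ with $x_n\to x$ in $L^p(\M)$, derives $Be\,\pi(x_n)=0$ (resp. $Bf\,\sigma(x_n)=0$), and concludes $\pi(x_n)=0$ (resp. $\sigma(x_n)=0$) from $s(Be)=e$ (resp. $s(Bf)=f$). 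Letting $n\to\infty$ then yields $\ker T_1=L^p(\ker\pi)$ and $\ker T_2=L^p(\ker\sigma)$.

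Finally, since $\pi$ is a $w^*$-continuous $*$-homomorphism and $\sigma$ a $w^*$-continuous anti-$*$-homomorphism, both $\ker\pi$ and $\ker\sigma$ are $w^*$-closed two-sided ideals of $\M$, so there are central projections $\alpha_1,\alpha_2\in\M$ with $\ker\pi=\alpha_1\M$ and $\ker\sigma=\alpha_2\M$. Since $\alpha_1,\alpha_2$ are central commuting projections, $L^p(\alpha_1\M)\cap L^p(\alpha_2\M)=L^p(\alpha_1\alpha_2\M)$, and therefore $\ker T=L^p(\alpha_1\alpha_2\M)$. Setting $z_0:=\alpha_1\alpha_2$, $\M_0:=z_0\M$ and $\widetilde{\M}:=(1-z_0)\M$ gives the decomposition $\M=\M_0\mathop{\oplus}\limits^\infty\widetilde{\M}$ with $\ker T=L^p(\M_0)$, as required.

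I expect the only genuine obstacle to be the identification $\ker T_i=L^p(\ker J_i)$: one must choose the truncations on the side matching the left/right module identity of the direct versus anti-direct factorization, verify that $x_n\in\M\cap L^p(\M)$ and $x_n\to x$, and justify passing from $B_iJ_i(x_n)=0$ to $J_i(x_n)=0$ through the support projection $s(B_i)$ (using that the range of $J_i(x_n)$ lies under $s(B_i)$ while $B_i$ is injective there). All of this, however, is exactly the bijectivity-free content already present in the proof of Proposition \ref{3.1}, so the remaining work is the essentially formal assembly described above.
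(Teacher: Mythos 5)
Your proof is correct, but it takes a genuinely different and more laborious route than the paper. The paper's argument is essentially three lines: set $\M_0:=\ker(J)$, which is a $w^*$-closed ideal, so that $\M=\M_0\mathop{\oplus}\limits^{\infty}\widetilde{\M}$ with $L^p(\M_0)\subset\ker(T)$ obviously; then note that $J|_{\widetilde{\M}}$ is one-to-one and invoke \cite[Remark 3.14(a)]{LMZ} to conclude that $T|_{L^p(\widetilde{\M})}$ is one-to-one, which forces $\ker(T)=L^p(\M_0)$. You instead split $J=\pi+\sigma$ into its direct and anti-direct parts, rerun the truncation argument from the proof of Proposition \ref{3.1} to get $\ker T_1=L^p(\ker\pi)$ and $\ker T_2=L^p(\ker\sigma)$, and intersect; since $J(x)=J(x)(e+f)$ gives $\ker J=\ker\pi\cap\ker\sigma$, you land on the same central projection and the same $\M_0$. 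What your route buys: it is self-contained on the two points the paper outsources, namely that $\ker(J)$ is a (two-sided) ideal -- which for a Jordan homomorphism is not completely obvious and is exactly what the decomposition $\ker\pi\cap\ker\sigma$ delivers -- and that injectivity of $J$ on a summand forces injectivity of $T$ there, which you reprove via the $xq_n$ and $q_nx$ truncations instead of citing \cite[Remark 3.14(a)]{LMZ}. What it costs is length and some bookkeeping (the triples $(we,Be,\pi)$, $(wf,Bf,\sigma)$ and the identity $T(x)=T(x)J(1)$ needed for ``$T(x)=0$ iff $T_1(x)=T_2(x)=0$'', which you label immediate but should be said). All the individual steps you flag as potential obstacles do go through exactly as you describe, so the argument is complete.
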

\begin{proof}
	Let $T:L^p(\M)\to L^p(\N)$ be a separating map and $J:\M\to\N$ be the Jordan homomorphism associated with $T$ via its Yeadon type factorization. Let $\M_0:=\ker(J)$. 
	Then $\M_0$ is an ideal. Since $J$ is $w^*$-continuous, $\M_0$ is $w^*$-closed. Hence we have a direct sum decomposition
$$\M=\M_0\mathop{\oplus}\limits^\infty\widetilde{\M}.$$
It is clear that $L^p(\M_0)\subset\ker{T}$. Further $J|_{\widetilde{\tiny\M}}$ is one-to-one. By \cite[Remark 3.14(a)]{LMZ} this implies that $T|_{L^p(\widetilde{\tiny\M})}$ is one-to-one. This yields the result.
\end{proof}
For any von Neumann algebra $\M$, we let $\M^{op}$ denote its opposite von Neumann algebra. Recall that the underlying dual Banach space structure and involution on $\M^{op}$ are the same as on $\M$ but the product of $x$ and $y$ is defined by $yx$ rather than $xy$. Note that the Banach spaces $L^p(\M)$ and $L^p(\M^{op})$ are the same. It is evident that, for von Neumann algebras $\M$ and $\N$, $J:\M\to\N$ is a $*$-homomorphism if and only if $$J^{op}:\M^{op}\to\N;\quad x\mapsto J(x),$$ is an anti-$*$-homomorphism. Hence, a separating map $T:L^p(\M)\to L^p(\N)$ has a direct Yeadon type factorization if and only if 
$$
T^{op}:L^p(\M^{op})\to L^p(\N);\quad x\mapsto T(x),
$$
has an anti-direct Yeadon type factorization.\\

Lemma \ref{l32} below is the principal ingredient of our characterization theorems. Its proof relies on the relation between the completely bounded norm or $S^1$-norm of the identity map
$$
I^{op}:L^p(\M)\to L^p(\M^{op})
$$
and the norms of the transformations
$$
[x_{ij}]_{1\leq i,j\leq m}\mapsto [x_{ji}]_{1\leq i,j\leq m}
$$
either on $L^p(\M\otimes M_m)$ or on $L^p(\M;S^1_m)$, in particular in the specific case when $\M=M_n$. We will use the fact that for any $n\geq1$, we have $L^p(M_n\otimes M_m)\simeq S^p_m[S^p_n]$, isometrically, provided that $S^p_n$ is equipped with the operator space structure given in \cite{P5}.

Let $t_m$ denote the transposition map on scalar $m\times m$ matrices. Assume that $\M$ is semifinite. The map $$I_{\tiny{\M}^{op}} \otimes t_m: \M^{op} \otimes M_m \to \M^{op} \otimes M_m^{op} $$ is a trace preserving
$*$-homomorphism, and so
$$
I_{L^p(\tiny{\M}^{op})} \otimes t_m
\colon L^p(\M^{op} \otimes M_m )\longrightarrow 
L^p(\M^{op}\otimes M_m^{op})
$$
is an isometry. Moreover  $\M^{op}\otimes M_m^{op} =(\M\otimes M_m)^{op}$, hence
$L^p(\M^{op}\otimes M_m^{op})
=L^p(\M\otimes M_m)$ isometrically. For any $[x_{ij}]_{1\leq i,j\leq m}$ in $L^p(\M)\otimes S^p_m$, since $ I_{L^p(\tiny{\M}^{op})} \otimes t_m$ maps
$[x_{ij}]$ to $[x_{ji}]$, we get that
\begin{equation}\label{OpTrCb}
\bignorm{[x_{ij}]}_{L^p(\tiny{\M}^{op}\otimes M_m)}
= \bignorm{[x_{ji}]}_{L^p(\tiny{\M}\otimes M_m)}.
\end{equation}
We now show that similarly, for any $[x_{ij}]_{1\leq i,j\leq m}$ in $L^p(\M)\otimes S^1_m$,
\begin{equation}\label{OpTrS1}
\bignorm{[x_{ij}]}_{L^p(\tiny{\M}^{op};\; S^1_m)}
= \bignorm{[x_{ji}]}_{L^p(\tiny{\M};\; S^1_m)}.
\end{equation}
To verify the identity (\ref{OpTrS1}), assume that $\norm{[x_{ij}]}_{L^p(\tiny{\M}^{op};\; S^1_m)}<1$.
Taking into account the opposite product and \eqref{defS1Norm}, we can write
$$
x_{ij}= \sum_k b_{kj}a_{ik}
$$
for some $a_{ik},b_{kj}$ in $L^{2p}(\M)$ such that 
$\sum_{i,k} a_{ik}^*a_{ik}$ and $\sum_{k,j} b_{kj}b_{kj}^*$
have norm $<1$ in $L^{p}(\M)$. This exacly means that 
$\norm{[x_{ji}]}_{L^p(\tiny{\M};\; S^1_m)}<1$. This shows the
inequality $\geq$ in (\ref{OpTrS1}). Reversing the argument
we find the other inequality.

Identities (\ref{OpTrCb}) and (\ref{OpTrS1}), respectively, imply 
\begin{equation}\label{OpCbNorm}
\bignorm{I^{op}\colon L^p(\M)\longrightarrow 
L^p(\M^{op})}_{cb} =\,\sup_{m\geq 1} 
\bignorm{I_{L^p(\tiny\M)}\otimes t_m\colon L^p(\M \otimes M_m)\longrightarrow
L^p(\M\otimes M_m)},
\end{equation}
and
\begin{equation}\label{OpS1Norm}
\bignorm{I^{op}\colon L^p(\M)\longrightarrow 
L^p(\M^{op})}_{S^1} =\,\sup_{m\geq 1} 
\bignorm{I_{L^p(\tiny\M)}\otimes t_m\colon L^p(\M;\;S^1_m)\longrightarrow
L^p(\M;S^1_m)}.
\end{equation}

When $\M=M_n$, the above identities can be more specific. In fact, as we show below, we have that for any $n\geq1$,
\begin{align}\label{special1}
	\bignorm{I^{op}\colon S^p_n\longrightarrow 
\{S^p_n\}^{op}}_{cb}=\bignorm{t_n:S^p_n\to S^p_n}_{cb}\end{align}
and
\begin{align}\label{special2}
\bignorm{I^{op}\colon S^p_n\longrightarrow 
\{S^p_n\}^{op}}_{S^1}=\bignorm{t_n:S^p_n\to S^p_n}_{S^1}.
\end{align}
Using (\ref{OpTrCb}) applied to $\M=M_n$, to prove \eqref{special1}, it is enough to show that 
for any $[x_{ij}]_{1\leq i,j\leq m}$ in $S^p_n\otimes S^p_m$,
\begin{equation}\label{CBTrans}
\bignorm{[t_n(x_{ij})]}_{S^p_m[S^p_n]}
= \bignorm{[x_{ji}]}_{S^p_m[S^{p}_n]}.
\end{equation}
This follows from the fact that $t_m\otimes t_n = t_{nm}$
is an isometry on $S^p_m[S^p_n]\simeq S^p_{nm}$, and 
hence 
$$
\bignorm{(t_m\otimes t_n)[t_n(x_{ij})]}_{S^p_m[S^p_n]} =
\bignorm{[t_n(x_{ij})]}_{S^p_m[S^p_n]}.
$$
Since $(t_m\otimes t_n)[t_n(x_{ij})]= [x_{ji}]$, this yields 
(\ref{CBTrans}). 

Likewise, using (\ref{OpTrS1}) applied to $\M=M_n$, to prove \eqref{special2}, it is enough to show that for any $[x_{ij}]_{1\leq i,j\leq m}$ in $S^p_n\otimes S^1_m$, 
\begin{equation}\label{RegTrans}
\bignorm{[t_n(x_{ij})]}_{S^p_n[S^1_m]}
= \bignorm{[x_{ji}]}_{S^{p}_n[S^1_m]}.
\end{equation}
Assume that $\norm{[t_n(x_{ij})]}_{S^p_n[S^1_m]}<1$.
According to \eqref{defS1Norm}, we can write
$$
t_n(x_{ij})= \sum_k a_{ik}b_{kj}
$$
for some $a_{ik},b_{kj}$ in $S^{2p}_n$ such that 
$\sum_{i,k} a_{ik} a_{ik}^*$ and $\sum_{k,j} b_{kj}^*b_{kj}$
have norm $<1$ in $S^p_n$. Then we have
$$
x_{ij} = \sum_k t_n(a_{ik}b_{kj})\,=\sum_k t_n(b_{kj}) t_n(a_{ik}),
$$
hence
$$
x_{ji} = \sum_k t_n(b_{ki}) t_n(a_{jk}).
$$
Further
$$
\sum_{k,j} t_n(a_{jk})^* t_n(a_{jk})
=t_n\Bigl(\sum_{j,k} a_{jk}a_{jk}^*\Bigr),
$$
and $t_n$ is an isomerty on $S^p_n$. Consequently,
$\sum_{k,j} t_n(a_{kj})^* t_n(a_{jk})$
has norm $<1$ in $S^p_n$. Similarly,
$\sum_{i,k} t_n(b_{ki}) t_n(b_{ki})^*$
has norm $<1$ in $S^p_n$. This shows that
$\norm{[x_{ji}]}_{S^p_n[S^1_m]}<1$. 
We have thus proved the
inequality $\geq$ in (\ref{RegTrans}). Reversing the argument
we find the other inequality.

In the sequel, $E(x)$ denotes the integer part of $x$.
 
\begin{lemma}\label{l32}
Suppose that $\mathcal{M}$ is a semifinite von Neumann algebra.

$(i)$ If $\mathcal{M}$ is subhomogeneous of degree $\leq N$ for some $N\geq1$, then 
for all $[x_{ij}]\in M_m\otimes L^p(\mathcal{M})$, $m\geq1$, we have that
\begin{align*}
\|[x_{ji}]\|_{L^p(\mathcal{M}\otimes M_m)}\leq N^{2\lvert 1/2-1/p\rvert}\|[x_{ij}]\|_{L^p(\mathcal{M}\otimes M_m)},
\end{align*}
and
\begin{align*}
\|[x_{ji}]\|_{L^p(\mathcal{M}; S_m^1)}\leq N\|[x_{ij}]\|_{L^p(\mathcal{M};\; S_m^1)}.
\end{align*}

$(ii)$ Suppose that there exists $K\geq1$ such that for all $[x_{ij}]\in L^p(\mathcal{M}) \otimes S^p_m$, $m\geq1$,
\begin{align}\label{eq5}
\|[x_{ji}]\|_{L^p(\mathcal{M}\otimes M_m)}\leq K\|[x_{ij}]\|_{L^p(\mathcal{M} \otimes M_m)}.
\end{align}
Then if $p\neq2$, $\mathcal{M}$ is subhomogeneous of degree $\leq N$  with $\displaystyle{N=E\left(K^{\frac{1}{2\lvert 1/2-1/p\rvert}}\right)}$.

$(iii)$ Suppose that there exists $K\geq1$ such that for all $[x_{ij}]\in L^p(\mathcal{M}) \otimes S^p_m$, $m\geq1$,
\begin{align}\label{eq5*}
\|[x_{ji}]\|_{L^p(\mathcal{M};S^1_m)}\leq K\|[x_{ij}]\|_{L^p(\mathcal{M};\;S^1_m)}.
\end{align}
Then $\mathcal{M}$ is subhomogeneous of degree $\leq N$  with $N=E(K)$.
\end{lemma}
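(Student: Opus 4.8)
The plan is to reduce all three parts to the single pair of computations
$$
\bignorm{t_n\colon S^p_n\to S^p_n}_{cb}=n^{2\abs{1/2-1/p}}\qquad\text{and}\qquad \bignorm{t_n\colon S^p_n\to S^p_n}_{S^1}=n,
$$
the first being the classical value of the completely bounded norm of the transposition on $S^p_n$, and the second its regular (equivalently, by the hyperfiniteness of $M_n$ and the remark recalled in Section 2, its $S^1$-) norm. Indeed, identities \eqref{OpCbNorm} and \eqref{OpS1Norm} show that the two quantities appearing in the lemma are exactly $\bignorm{I^{op}\colon L^p(\M)\to L^p(\M^{op})}_{cb}$ and $\bignorm{I^{op}\colon L^p(\M)\to L^p(\M^{op})}_{S^1}$, while \eqref{special1} and \eqref{special2} identify these with the transposition norms in the case $\M=M_n$.

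For part $(i)$ I would start from the decomposition \eqref{n-sub}, writing $\M\simeq\mathop{\oplus}\limits_{1\leq j\leq r}^{\infty} L^\infty(\Omega_j;M_{n_j})$ with each $n_j\leq N$. Because a direct sum splits the cb- and $S^1$-norms of $I^{op}$ as the supremum over the summands (the map $I^{op}$ acting block-diagonally on the $\ell^p$-direct sums recalled in Section 2), and because the abelian factor satisfies $L^\infty(\Omega_j)^{op}=L^\infty(\Omega_j)$ and so does not affect these norms, the problem reduces to a single matrix block $M_{n_j}$. For that block, \eqref{special1} and \eqref{special2} identify the norms of $I^{op}$ with $\bignorm{t_{n_j}}_{cb}$ and $\bignorm{t_{n_j}}_{S^1}$; invoking the two transposition norm values together with the monotonicity $n_j\leq N$ yields the asserted bounds $N^{2\abs{1/2-1/p}}$ and $N$.

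For parts $(ii)$ and $(iii)$ I would argue by contradiction. Assume the stated bound holds with constant $K$, and suppose $\M$ is not subhomogeneous of degree $\leq N$, where $N=E(K^{1/(2\abs{1/2-1/p})})$ in case $(ii)$ (so that $K<(N+1)^{2\abs{1/2-1/p}}$, where the exponent is positive precisely because $p\neq2$) and $N=E(K)$ in case $(iii)$ (so that $K<N+1$). By Lemma \ref{l2} there is a $*$-homomorphism $\gamma\colon M_{N+1}\to\M$ whose normalization $\iota$ is a complete isometry and an $S^1$-isometry $S^p_{N+1}\to L^p(\M)$. The key point is that $\gamma$ is equally a trace-preserving $*$-homomorphism $M_{N+1}^{op}\to\M^{op}$, so the same $\iota$ is simultaneously a complete isometry and an $S^1$-isometry $\{S^p_{N+1}\}^{op}\to L^p(\M^{op})$. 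Since $I^{op}$ is the identity map on the common underlying Banach space, the square relating $\iota$ on source and target commutes, and the isometry properties give $\bignorm{I^{op}\colon S^p_{N+1}\to\{S^p_{N+1}\}^{op}}_{cb}\leq\bignorm{I^{op}\colon L^p(\M)\to L^p(\M^{op})}_{cb}$, and likewise for the $S^1$-norm. Combined with \eqref{special1}, \eqref{special2} and the transposition norm values, this forces $\bignorm{I^{op}\colon L^p(\M)\to L^p(\M^{op})}_{cb}\geq(N+1)^{2\abs{1/2-1/p}}>K$ in case $(ii)$ and $\bignorm{I^{op}\colon L^p(\M)\to L^p(\M^{op})}_{S^1}\geq N+1>K$ in case $(iii)$, contradicting the hypothesis \eqref{eq5}, resp. \eqref{eq5*}, via \eqref{OpCbNorm}, resp. \eqref{OpS1Norm}.

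The main obstacle is twofold. First, one needs the exact transposition norms: the cb-value $n^{2\abs{1/2-1/p}}$ is classical (and explains why $(ii)$ requires $p\neq2$, since at $p=2$ the exponent vanishes and the estimate degenerates to the trivial bound $1$), whereas the $S^1$-value $n$ requires the identification of the $S^1$-norm with the regular norm on the hyperfinite algebra $M_n$ together with a direct computation. Second, for $(ii)$ and $(iii)$ one must verify carefully that the embedding furnished by Lemma \ref{l2}, which was constructed as a $*$-homomorphism into $\M$, transfers not only the operator space and $S^1$ structure of $S^p_{N+1}$ but also that of its opposite $\{S^p_{N+1}\}^{op}$; this compatibility is exactly what makes the transposition norm on $S^p_{N+1}$ a genuine lower bound for the corresponding norms of $I^{op}$ on $L^p(\M)$.
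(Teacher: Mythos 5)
Your proposal is correct and follows essentially the same route as the paper: part (i) reduces via the subhomogeneous decomposition \eqref{n-sub} and a Fubini-type identification to the single-block transposition norms $\norm{t_n}_{cb}=n^{2\lvert 1/2-1/p\rvert}$ and $\norm{t_n}_{S^1}=n$, and parts (ii)--(iii) argue by contradiction using the (complete and $S^1$-) isometric embedding of $S^p_{N+1}$ from Lemma \ref{l2} to force the lower bounds $(N+1)^{2\lvert 1/2-1/p\rvert}$, resp.\ $N+1$, on the norms of $I^{op}$. Your explicit verification that the embedding is compatible with the opposite structures is precisely the point the paper leaves implicit, so nothing is missing.
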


\begin{proof}
$(i)$ Assume that $\M=L^\infty(\Omega;M_n)$. Let $m\geq1$ be given. We have that
\begin{align*}
 L^p(\M \otimes M_m)\simeq L^p(\Omega; S^p_m[S^p_{n}]).
\end{align*}
By Pisier-Fubini Theorem \cite[(3.6)]{P5},
\begin{align*}
 L^p(\M;S^1_m)\simeq L^p(\Omega; S^p_{n}[S^1_m]).
 \end{align*}
Consequently,
\begin{align}\label{n1}
\bignorm{I_{L^p(\tiny\M)}\otimes t_m\colon L^p(\M\otimes M_m)\longrightarrow
L^p(\M\otimes M_m)} =\bignorm{t_m\otimes I_{S^p_n}\colon S^p_m[S^p_n]
\longrightarrow
S^p_m[S^p_n]}.
\end{align}
and
\begin{align}\label{n2}
\bignorm{I_{L^p(\tiny\M)}\otimes t_m\colon L^p(\M;S^1_m)\longrightarrow
L^p(\M;S^1_m)} = \bignorm{I_{S^p_n}\otimes t_m\colon S^p_n[S^1_m]
\longrightarrow
S^p_n[S^1_m]}.
\end{align}
Applying (\ref{OpCbNorm}) to both sides of (\ref{n1}), we deduce
$$
\bignorm{I^{op}\colon L^p(\M)\longrightarrow 
L^p(\M^{op})}_{cb} =\bignorm{I^{op}\colon S^p_n\longrightarrow 
\{S^p_n\}^{op}}_{cb},
$$
and applying (\ref{OpS1Norm}) to both sides of (\ref{n2}), we deduce that
$$
\bignorm{I^{op}\colon L^p(\M)\longrightarrow 
L^p(\M^{op})}_{S^1} =\bignorm{I^{op}\colon S^p_n\longrightarrow 
\{S^p_n\}^{op}}_{S^1}.
$$

By \cite[Lemma 5.3]{LMZ2},
$$
\bignorm{t_n:S^p_n\to S^p_n}_{cb}=n^{2\lvert 1/p-1/2\rvert}\quad\text{and}\quad \bignorm{t_n:S^p_n\to S^p_n}_{S^1}=n,
$$
hence we obtain by \eqref{special1} and \eqref{special2} that 
$$
\bignorm{I^{op}\colon L^p(\M)\longrightarrow 
L^p(\M^{op})}_{cb} =n^{2\lvert1/p-1/2\rvert}\quad\text{and}\quad \bignorm{I^{op}\colon L^p(\M)\longrightarrow 
L^p(\M^{op})}_{S^1}=n.
$$

When $\M$ is subhomogeneous of degree $\leq N$, there exist $r\geq1$, integers $1\leq n_1\leq n_2\leq \dots \leq n_r\leq N$ and abelian von Neumann algebras $L^\infty(\Omega_1),\dots,L^\infty(\Omega_r)$ such that \eqref{n-sub} holds. Then for any $m\geq1$, we have that
 \begin{align*}
 L^p(\M\otimes M_m)\simeq \mathop\oplus\limits^p_{1\leq j\leq r}L^p(\Omega_j; S^p_m[S^p_{n_j}])\quad\text{and}\quad
 L^p(\M;S^1_m)\simeq \mathop\oplus\limits^p_{1\leq j\leq r}L^p(\Omega_j; S^p_{n_j}[S^1_m]).
 \end{align*}
Using our previous argument and direct sums we deduce that
$$
\|I^{op}:L^p(\M)\to L^p(\M^{op})\|_{cb}\leq N^{2\lvert\frac{1}{p}-\frac{1}{2}\rvert}\quad\text{and}\quad \|I^{op}:L^p(\M)\to L^p(\M^{op})\|_{S^1}\leq N.
$$
The result follows from \eqref{OpTrCb} and \eqref{OpTrS1}.

$(ii)$ Suppose that $\M$ is not subhomogeneous of degree $\leq N=E(K^{\frac{1}{2\lvert1/2-1/p\rvert}})$. By Lemma \ref{l2}, there exists a complete isometry
$$
S^p_{N+1}\,\hookrightarrow\, \M.
$$
This embedding implies that for any $m\geq 1$,
$$
\bignorm{t_m\otimes I_{S^p_{N+1}}\colon
S^p_{m}[S^p_{N+1}] \longrightarrow S^p_{m}[S^p_{N+1}]}
\leq \bignorm{I_{L^p(\tiny\M)}\otimes t_m\colon
L^p(\M\otimes M_m) \longrightarrow L^p(\M\otimes M_m)}.
$$
According to (\ref{OpCbNorm}) and \eqref{special1}, this implies that
$$
\bignorm{t_{N+1}\colon 
S^p_{N+1}\longrightarrow S^p_{N+1}}_{cb}\leq
\bignorm{I^{op}\colon L^p(\M)\longrightarrow 
L^p(\M^{op})}_{cb}.
$$
Hence 
$$
\bignorm{I^{op}\colon L^p(\M)\longrightarrow 
L^p(\M^{op})}_{cb}\geq  (N+1)^{2 \vert \frac{1}{p} -\frac12 \vert}.
$$
Comparing this with inequality (\ref{eq5}) above and applying \eqref{OpTrCb}, we get a contradiction. 

$(iii)$ The proof is similar to the proof of part $(ii)$.
\iffalse
%Likewise, if $\M$ is not subhomogeneous of degree $\leq N$, then 
using the existence of an $S^1$-isometry from 
$S^p_{N+1}$ into $\M$, given by Lemma \ref{l2}, we obtain that
$$
\bignorm{I^{op}\colon L^p(\M)\longrightarrow 
L^p(\M^{op})}_{S^1}\geq N+1.
$$
Comparing this with inequality (\ref{eq5*}) above, we get that  $N+1$ can be maximum $E(K)$ and therefore $\M$ is subhomogeneous. \fi
\end{proof}

\begin{prop}\label{p34}
Let $T:L^p(\mathcal{M})\to L^p(\mathcal{N})$ be separating. If $\M$ is subhomogeneous then $T$ is completely bounded and $S^1$-bounded.
\end{prop}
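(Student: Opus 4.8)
The plan is to decompose $T$ into a direct part and an anti-direct part, dispatch the direct part immediately via Propositions \ref{4.4} and \ref{4.5}, and reduce the anti-direct part to the direct case by composing with the flip $I^{op}\colon L^p(\M)\to L^p(\M^{op})$, whose complete and $S^1$ boundedness is exactly what subhomogeneity of $\M$ buys us through Lemma \ref{l32}(i).

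First I would apply the Yeadon type factorization to $T$, with triple $(w,B,J)$, and split the Jordan homomorphism as $J=\pi+\sigma$ by means of the central projections $e,f$ of the algebra $\D=\overline{J(\M)}^{\,w^*}$ recalled just before Proposition \ref{3.1}. Since $B$ commutes with $J(\M)$, it commutes with $e$ and $f$, and exactly as in the opening of the proof of Proposition \ref{3.1} (see also \cite[Remark 4.3]{LMZ2}) this produces a splitting $T=T_1+T_2$ with $T_1(x)=T(x)e$ and $T_2(x)=T(x)f$, where $T_1$ carries a direct Yeadon type factorization (triple $(we,Be,\pi)$) and $T_2$ carries an anti-direct one (triple $(wf,Bf,\sigma)$). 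No injectivity or surjectivity of $T$ is used here, only the structure of $J$.

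By Propositions \ref{4.4} and \ref{4.5}, $T_1$ is completely bounded and $S^1$-bounded. For $T_2$ I would pass to the opposite algebra: by the discussion preceding Lemma \ref{l32}, the map $T_2^{op}\colon L^p(\M^{op})\to L^p(\N)$, $x\mapsto T_2(x)$, has a direct Yeadon type factorization, and is therefore completely bounded and $S^1$-bounded, again by Propositions \ref{4.4} and \ref{4.5}. Because $\M$ is subhomogeneous of degree $\leq N$, Lemma \ref{l32}(i) together with \eqref{OpCbNorm} and \eqref{OpS1Norm} gives $\|I^{op}\|_{cb}\leq N^{2\lvert 1/2-1/p\rvert}$ and $\|I^{op}\|_{S^1}\leq N$. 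As maps between Banach spaces one has the factorization $T_2=T_2^{op}\circ I^{op}$, so the submultiplicativity of the $cb$- and $S^1$-norms under composition yields that $T_2$ is completely bounded and $S^1$-bounded. Finally $T=T_1+T_2$ is completely bounded and $S^1$-bounded by subadditivity of both norms.

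The only point that is not purely formal is the reduction $T_2=T_2^{op}\circ I^{op}$ at the level of matrix amplifications: one must check that $(T_2^{op}\otimes I_{S^1_m})\circ(I^{op}\otimes I_{S^1_m})=T_2\otimes I_{S^1_m}$ as maps $L^p(\M;S^1_m)\to L^p(\N;S^1_m)$, and likewise for the Schatten amplifications, so that the finite constants $N$ and $N^{2\lvert 1/2-1/p\rvert}$ coming from subhomogeneity may legitimately be multiplied against the $cb$- and $S^1$-norms of $T_2^{op}$. This is precisely where subhomogeneity is indispensable, since for a general $\M$ the norms $\|I^{op}\|_{cb}$ and $\|I^{op}\|_{S^1}$ need not be finite.
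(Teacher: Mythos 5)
Your proof is correct and follows essentially the same route as the paper: split $T=T_1+T_2$ via the St\o rmer decomposition of $J$ (the paper normalizes to $w=J(1)$ first and cites \cite[Remark 4.3]{LMZ2}, but this is the same decomposition), handle $T_1$ by Propositions \ref{4.4} and \ref{4.5}, and write $T_2=T_2^{op}\circ I^{op}$, bounding $I^{op}$ via Lemma \ref{l32}(i) using subhomogeneity. The additional care you take about the amplified factorization $(T_2^{op}\otimes I)\circ(I^{op}\otimes I)=T_2\otimes I$ is a valid and slightly more explicit justification of the paper's ``by composition'' step.
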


\begin{proof}
Changing $T$ to $w^*T$, we can assume that $w=J(1)$. By \cite[Remark 4.3]{LMZ2}, we can write $T$ as a sum $T=T_1+T_2$ such that $T_1$ has a direct Yeadon type factorization and $T_2$ has an anti-direct Yeadon type factorization. By Propositions \ref{4.4} and \ref{4.5}, $T_1$ is completely bounded and $S^1$-bounded. Hence it suffices to show that $T_2$ is completely bounded and $S^1$-bounded. Let $I^{op}:L^p(\M)\to L^p(\M^{op})$ be the identity map and set $T_2^{op}=:T_2\circ {I^{op}}^{-1} $. Since $T_2$ has an anti-direct Yeadon type factorization, $T_2^{op}$ has a direct Yeadon type factorization. So, by Propositions \ref{4.4} and \ref{4.5}, $T_2^{op}$ is completely bounded and $S^1$-bounded. Since $\M$ is subhomogeneous, part $(i)$ of Lemma \ref{l32} and its proof show that $I^{op}$ is completely bounded and $S^1$-bounded. By composition, we obtain that $T_2=T_2^{op}\circ I^{op}$ is completely bounded and $S^1$-bounded.
\end{proof}

%%%%%%
\begin{prop}\label{p33}
Suppose that $T:L^p(\mathcal{M})\to L^p(\mathcal{N})$ is a bijective separating map with an anti-direct Yeadon type factorization. 
\begin{itemize}
\item[(i)] If $p\neq2$ and $T$ is completely bounded then $\mathcal{M}$ is subhomogeneous. 
\item[(ii)] If $T$ is $S^1$-bounded then $\mathcal{M}$ is subhomogeneous. 
\end{itemize}
\end{prop}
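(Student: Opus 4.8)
The plan is to transfer the problem to the opposite algebra and reduce it to the transposition estimates of Lemma \ref{l32}. Since $T$ has an anti-direct Yeadon type factorization, the discussion preceding Lemma \ref{l32} (applied with $T^{op}$ in place of $T$, using $(\M^{op})^{op}=\M$) shows that the map $T^{op}:=T\circ (I^{op})^{-1}\colon L^p(\M^{op})\to L^p(\N)$, which coincides with $T$ at the level of the underlying Banach spaces, has a \emph{direct} Yeadon type factorization; moreover $T=T^{op}\circ I^{op}$, where $I^{op}\colon L^p(\M)\to L^p(\M^{op})$ is the identity map. Being a composition of bijections, $T^{op}$ is itself bijective, and inverting the relation above gives the key identity
$$
I^{op}=(T^{op})^{-1}\circ T .
$$
It therefore suffices to show that $I^{op}$ is completely bounded in case (i) and $S^1$-bounded in case (ii); the conclusion that $\M$ is subhomogeneous will then follow from Lemma \ref{l32}.

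Next I would check that both factors on the right-hand side are well behaved. In case (i) the map $T$ is completely bounded by hypothesis, and in case (ii) it is $S^1$-bounded by hypothesis. The map $(T^{op})^{-1}$ is handled by the inversion results of Section 3: since $T^{op}$ is a bijective separating map with a direct Yeadon type factorization, Proposition \ref{lastprop} shows that $(T^{op})^{-1}$ is again separating, and Remark \ref{remarkJ} shows that $(T^{op})^{-1}$ again admits a direct Yeadon type factorization. Consequently Propositions \ref{4.4} and \ref{4.5} apply to $(T^{op})^{-1}$, giving that it is both completely bounded and $S^1$-bounded. Since complete boundedness and $S^1$-boundedness are stable under composition, the identity above yields that $I^{op}$ is completely bounded in case (i) and $S^1$-bounded in case (ii).

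To finish, I would feed this back into the transposition identities. In case (i), complete boundedness of $I^{op}$ together with \eqref{OpCbNorm} shows that the maps $[x_{ij}]\mapsto[x_{ji}]$ are uniformly bounded on $L^p(\M\otimes M_m)$, i.e. \eqref{eq5} holds for some $K\geq1$; since $p\neq2$, Lemma \ref{l32}(ii) then gives that $\M$ is subhomogeneous. In case (ii), $S^1$-boundedness of $I^{op}$ together with \eqref{OpS1Norm} shows that \eqref{eq5*} holds, and Lemma \ref{l32}(iii) gives subhomogeneity of $\M$ for every $1\leq p<\infty$.

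The essential point, and the step I expect to carry the weight, is the control of $(T^{op})^{-1}$: the whole argument hinges on the inverse of a bijective separating map with a direct Yeadon type factorization again being of the same type, which is exactly the content of Proposition \ref{lastprop} together with Remark \ref{remarkJ}. The restriction $p\neq2$ in (i) is genuinely needed: at $p=2$ the exponent $2\lvert 1/2-1/p\rvert$ appearing in Lemma \ref{l32}(i) vanishes, so $I^{op}$ is automatically a complete isometry and complete boundedness detects nothing about the degree, whereas the $S^1$-version in (iii) imposes no restriction on $p$.
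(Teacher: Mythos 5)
Your proposal is correct and follows essentially the same route as the paper: the same reduction $I^{op}=(T^{op})^{-1}\circ T$ via the opposite algebra, the same use of Proposition \ref{lastprop} and Remark \ref{remarkJ} to control $(T^{op})^{-1}$, Propositions \ref{4.4} and \ref{4.5} for its complete/$S^1$-boundedness, and Lemma \ref{l32}(ii)--(iii) to conclude subhomogeneity. Your closing remark on why $p\neq 2$ is needed in case (i) is also accurate.
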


\begin{proof}
$(i)$ Suppose that $T:L^p(\M)\to L^p(\N)$, $1\leq p\neq2<\infty$, is a bijective separating map with an anti-direct Yeadon type factorization. Assume that $T$ is completely bounded. Let $I^{op}:L^p(\M)\to L^p(\M^{op})$ be the identity map and set $T^{op}:=T\circ {I^{op}}^{-1}$. Since $T$ is bijective with an anti-direct Yeadon type factorization, $T^{op}$ is bijective with a direct Yeadon type factorization. By part $(i)$ of Proposition \ref{lastprop} and Remark \ref{remarkJ}, ${T^{op}}^{-1}$ is also separating with a direct Yeadon type factorization. Therefore, by Proposition \ref{4.4}, ${T^{op}}^{-1}$ is completely bounded. Hence, $I^{op}:= {T^{op}}^{-1}\circ T$ is completely bounded. It now follows from part $(ii)$ of Lemma \ref{l32} and \eqref{OpTrCb} that $\M$ is subhomogeneous.

$(ii)$ The same argument as in part $(i)$ with $S^1$-bounded (norm) replacing completely bounded (norm), Proposition \ref{4.5} replacing Proposition \ref{4.4}, part $(iii)$ of Lemma \ref{l32} replacing its part $(ii)$ and \eqref{OpTrS1} replacing \eqref{OpTrCb} yields the result.
\end{proof}
\begin{remark}
Suppose that $T:L^p(\M)\to L^p(\N)$, $1\leq p<\infty$, is a surjective separating isometry with an anti-direct Yeadon type factorization. The proof of Proposition \ref{p33} shows that when $T$ is completely bounded and $p\neq2$, $\M$ is subhomogeneous of degree $\leq E(\|T\|_{cb}^{\frac{1}{2\lvert 1/2-1/p\rvert}})$. When $T$ is $S^1$-bounded, $\M$ is subhomogeneous of degree $\leq E(\|T\|_{S^1})$. 
\end{remark}
\begin{theorem}\label{4.8}
Let $T : L^p(\M) \to L^p(\N)$, $1\leq p\neq2 <\infty$, be a bounded separating map that is surjective. Then the following are equivalent.
\begin{itemize}
\item[(i)] $T$ is completely bounded.
\item[(ii)] There exists a decomposition $\M=\M_1 \mathop\oplus\limits^{\infty}\M_2$ such that $T|_{L^p(\tiny\M_1)}$ has a direct Yeadon type factorization and $\M_2$ is subhomogeneous.
\end{itemize}
\end{theorem}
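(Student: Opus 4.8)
The plan is to prove the two implications separately; $(ii)\Rightarrow(i)$ is routine and in fact holds for every $p$, whereas $(i)\Rightarrow(ii)$ carries the real content and is where the hypothesis $p\neq2$ enters.

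\emph{Proof of $(ii)\Rightarrow(i)$.} Given a decomposition $\M=\M_1\mathop{\oplus}\limits^{\infty}\M_2$, I would use the isometric splitting $L^p(\M)=L^p(\M_1)\mathop{\oplus}\limits^p L^p(\M_2)$, whose coordinate projections $P_1,P_2$ and inclusions are complete contractions. Since $T|_{L^p(\M_1)}$ has a direct Yeadon type factorization, Proposition \ref{4.4} shows it is completely bounded. The restriction $T|_{L^p(\M_2)}$ is still separating (restriction to a direct summand preserves the separating condition), and as $\M_2$ is subhomogeneous, Proposition \ref{p34} shows it is completely bounded as well. Writing $T=T|_{L^p(\M_1)}\circ P_1+T|_{L^p(\M_2)}\circ P_2$, tensoring with $I_{S^p_m}$ and applying the triangle inequality over the $\ell^p$-direct sum yields $\|T\otimes I_{S^p_m}\|\leq \|T|_{L^p(\M_1)}\|_{cb}+\|T|_{L^p(\M_2)}\|_{cb}$ uniformly in $m$, so $T$ is completely bounded.

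\emph{Proof of $(i)\Rightarrow(ii)$.} First I would reduce to the bijective case. By Lemma \ref{4.1} there is a decomposition $\M=\M_0\mathop{\oplus}\limits^{\infty}\widetilde{\M}$ with $\ker T=L^p(\M_0)$, so that $\widetilde{T}:=T|_{L^p(\widetilde{\M})}$ is bijective and separating; it is completely bounded, being the composition of $T$ with the complete isometry $L^p(\widetilde{\M})\hookrightarrow L^p(\M)$. Applying Proposition \ref{3.1} to $\widetilde{T}$ produces decompositions $\widetilde{\M}=\widetilde{\M}_1\mathop{\oplus}\limits^{\infty}\widetilde{\M}_2$, $\N=\N_1\mathop{\oplus}\limits^{\infty}\N_2$ and bijective separating maps $\widetilde{T}_1:L^p(\widetilde{\M}_1)\to L^p(\N_1)$ with a direct Yeadon type factorization and $\widetilde{T}_2:L^p(\widetilde{\M}_2)\to L^p(\N_2)$ with an anti-direct Yeadon type factorization, with $\widetilde{T}=\widetilde{T}_1+\widetilde{T}_2$. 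Because $\widetilde{T}_2$ arises from $\widetilde{T}$ by pre- and post-composition with the complete contractions onto the summands $L^p(\widetilde{\M}_2)$ and $L^p(\N_2)$, it too is completely bounded. Since $p\neq2$ and $\widetilde{T}_2$ is bijective with an anti-direct Yeadon type factorization, Proposition \ref{p33}$(i)$ forces $\widetilde{\M}_2$ to be subhomogeneous.

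To conclude I would set $\M_1:=\M_0\mathop{\oplus}\limits^{\infty}\widetilde{\M}_1$ and $\M_2:=\widetilde{\M}_2$. On $L^p(\M_0)$ the map $T$ vanishes, and the zero map trivially admits a direct Yeadon type factorization (with $J=0$); since $\widetilde{T}_1$ has a direct Yeadon type factorization on $L^p(\widetilde{\M}_1)$, the restriction $T|_{L^p(\M_1)}$ has a direct Yeadon type factorization whose Jordan homomorphism is the $*$-homomorphism obtained as the direct sum of $0$ and the $*$-homomorphism underlying $\widetilde{T}_1$. As $\M_2=\widetilde{\M}_2$ is subhomogeneous, this is the required decomposition. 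The main obstacle is the anti-direct summand: the argument genuinely needs Proposition \ref{3.1} to deliver a \emph{bijective} map $\widetilde{T}_2$ with anti-direct factorization so that Proposition \ref{p33}$(i)$ applies, and it is precisely here that $p\neq2$ is indispensable, the transposition being completely bounded on $S^2_n$.
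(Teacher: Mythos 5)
Your proposal is correct and follows essentially the same route as the paper: Lemma \ref{4.1} to reduce to the bijective case, Proposition \ref{3.1} to split off the anti-direct summand, Proposition \ref{p33}$(i)$ to conclude subhomogeneity, and Propositions \ref{4.4} and \ref{p34} for the converse. The only difference is that you spell out the reduction step the paper leaves implicit, absorbing the kernel summand $\M_0$ into $\M_1$ via the observation that the zero map has a direct Yeadon type factorization, which is a correct and welcome clarification.
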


\begin{proof}
$(i)\implies (ii)$ Suppose that $T:L^p(\mathcal{M})\to L^p(\mathcal{N})$, $1\leq p \neq2 <\infty$, is a surjective completely bounded separating map. In view of Lemma \ref{4.1}, we may assume $T$ is bijective. 

By Proposition \ref{3.1}, there exist decompositions $\M=\M_1\mathop{\oplus}\limits^\infty\M_2$ and $\N=\N_1\mathop{\oplus}\limits^\infty\N_2$ and surjective separating maps $T_1:L^p(\M_1)\to L^p(\N_1)$ and $T_2:L^p(\M_2)\to L^p(\N_2)$ such that $T_1$ has a direct Yeadon type factorization, $T_2$ has an anti-direct Yeadon type factorization and $T=T_1+T_2$. Since $T$ is completely bounded, $T_2$ is also completely bounded.  By part $(i)$ of Proposition \ref{p33}, $\mathcal{M}_2$ must be subhomogeneous.

$(ii) \implies (i)$ This is a consequence of Propositions \ref{4.4} and \ref{p34}.
\end{proof}
\begin{theorem}\label{4.9}
Let $T : L^p(\M) \to L^p(\N)$, $1\leq p <\infty$, be a separating map that is surjective. Then the following are equivalent.
\begin{itemize}
\item[(i)] $T$ is $S^1$-bounded.
\item[(ii)] There exists a decomposition $\M=\M_1 \mathop\oplus\limits^{\infty}\M_2$ such that $T|_{L^p(\tiny\M_1)}$ has a direct Yeadon type factorization and $\M_2$ is subhomogeneous.
\end{itemize}
\end{theorem}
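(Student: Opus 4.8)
The plan is to mirror the proof of Theorem \ref{4.8} verbatim, replacing each completely bounded ingredient by its $S^1$-bounded counterpart: Proposition \ref{4.5} in place of Proposition \ref{4.4}, part $(ii)$ of Proposition \ref{p33} in place of part $(i)$, and the $S^1$-part of Proposition \ref{p34} in place of its complete boundedness part. The decisive structural advantage here is that the $S^1$-theory carries no restriction on the exponent: part $(iii)$ of Lemma \ref{l32} and part $(ii)$ of Proposition \ref{p33} hold for every $1\leq p<\infty$, whereas their completely bounded analogues degenerate at $p=2$ (where the transposition is a complete isometry). This is precisely why Theorem \ref{4.9} covers all $p$, including $p=2$, while Theorem \ref{4.8} must exclude it.

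For $(i)\implies(ii)$, I would first apply Lemma \ref{4.1} to reduce to the bijective case: writing $\M=\M_0\mathop{\oplus}\limits^\infty\widetilde{\M}$ with $\ker(T)=L^p(\M_0)$, the restriction of $T$ to $L^p(\widetilde{\tiny\M})$ is a bijective separating map onto $L^p(\N)$, and the kernel summand $\M_0$ may be absorbed into $\M_1$ at the very end, since the zero map $0\colon L^p(\M_0)\to L^p(\N)$ carries the trivial direct Yeadon type factorization with $J=0$, $B=0$, $w=0$. Assuming then that $T$ is bijective, I would invoke Proposition \ref{3.1} to produce decompositions $\M=\M_1\mathop{\oplus}\limits^\infty\M_2$ and $\N=\N_1\mathop{\oplus}\limits^\infty\N_2$, together with a bijective $T_1\colon L^p(\M_1)\to L^p(\N_1)$ with a direct Yeadon type factorization and a bijective $T_2\colon L^p(\M_2)\to L^p(\N_2)$ with an anti-direct Yeadon type factorization, satisfying $T=T_1+T_2$. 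Since the $S^1$-structure splits isometrically as $L^p(\M;S^1)=L^p(\M_1;S^1)\mathop{\oplus}\limits^p L^p(\M_2;S^1)$ (by \cite[Lemma 5.2]{LMZ2}), $S^1$-boundedness of $T$ descends to the summand $T_2$. As $T_2$ is bijective with an anti-direct Yeadon type factorization, part $(ii)$ of Proposition \ref{p33} forces $\M_2$ to be subhomogeneous, which is exactly condition $(ii)$.

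For the converse $(ii)\implies(i)$, given a decomposition $\M=\M_1\mathop{\oplus}\limits^\infty\M_2$ as in $(ii)$, I would again use the isometric splitting $L^p(\M;S^1)=L^p(\M_1;S^1)\mathop{\oplus}\limits^p L^p(\M_2;S^1)$ to write $T\otimes I_{S^1_m}$ as the direct sum of the corresponding restrictions. The restriction $T|_{L^p(\tiny\M_1)}$ has a direct Yeadon type factorization, hence is $S^1$-bounded by Proposition \ref{4.5}. The restriction $T|_{L^p(\tiny\M_2)}$ is again separating and $\M_2$ is subhomogeneous, so it is $S^1$-bounded by Proposition \ref{p34}. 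Since the norm of a direct sum of maps on the $\mathop{\oplus}\limits^p$-splitting is the maximum of the two norms, $T$ is $S^1$-bounded.

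The only genuinely delicate point is the passage, in the first implication, from $S^1$-boundedness of $T$ to $S^1$-boundedness of the anti-direct summand $T_2$. This rests entirely on the isometric identification of $L^p(\M;S^1)$ with the $\ell^p$-direct sum of the pieces $L^p(\M_j;S^1)$, which guarantees that $T\otimes I_{S^1_m}$ restricts cleanly to each summand with no loss in norm; once this is in hand, the remainder is a routine transcription of the completely bounded argument of Theorem \ref{4.8}.
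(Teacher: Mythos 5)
Your proposal is correct and is exactly the paper's proof: the paper proves Theorem \ref{4.9} by declaring it ``similar to Theorem \ref{4.8}'', with Proposition \ref{4.5} replacing Proposition \ref{4.4} and part $(ii)$ of Proposition \ref{p33} replacing part $(i)$, which is precisely the transcription you carry out (your explicit handling of the kernel summand via the zero map's trivial direct factorization is a detail the paper leaves implicit but is fine). The only quibble is that in $(ii)\implies(i)$ the norm of $T\otimes I_{S^1_m}$ need not be the maximum of the two restricted norms unless the images land in complementary summands of $L^p(\N;S^1_m)$; the triangle inequality still gives a uniform bound, so $S^1$-boundedness follows regardless.
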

\begin{proof}
	The proof is similar to Theorem \ref{4.8}, replacing completely bounded with $S^1$-bounded, part $(i)$ of Proposition \ref{p33} by its part $(ii)$ and Proposition \ref{4.4} by Proposition \ref{4.5}.
\end{proof}
The following example shows the surjectivity assumption in Theorems \ref{4.8} and \ref{4.9} is essential. In fact in this example, on a non-subhomogeneous semifinite von Neumann algebra $\M$ and for a given $\epsilon>0$, we construct a separating isometry $T:L^p(\M)\to L^p(\N)$ such that $T$ is not surjective, $\|T\|_{cb}\leq 1+\epsilon$, $\|T\|_{S^1}\leq 1+\epsilon$ and part $(ii)$ of Theorems \ref{4.8} and \ref{4.9} is not satisfied. 

The isometry $T$ in our example is set up between hyperfinite von Neumann algebras and so $\|T\|_{cb}\leq\|T\|_{S^1}$ (see \cite[Proposition 2.2]{P2} and \cite[Proposition 3.11]{LMZ2}). Therefore, we only need to verify that for such $T$ we have that $\|T\|_{S^1}\leq 1+\epsilon$.
\begin{example}\label{lastexample}
Let $1<p<\infty$. Consider the von Neumman algebra
$$
\M=\ell^\infty \{M_n\}= \bigl\{(x_n)_{n\geq 1}\, :\,\forall\, n\geq 1, \ 
x_n\in M_n\,\ \hbox{and}\,\ \sup_{n\geq 1}\norm{x_n}_\infty\,<\infty\bigr\},
$$
the infinite direct sum of all $M_n$, $n\geq 1$.
Let 
$
\N:=\M \mathop\oplus\limits^\infty \M,
$
the direct sum of two copies of $\M$. The noncommutative $L^p$-space associated with $\M$ is 
$$
\ell^p \{S^p_n\}= \bigl\{(x_n)_{n\geq 1}\, :\,\forall\, n\geq 1,\ 
x_n\in S^p_n\,\  \hbox{and}\,\ \sum_{n\geq 1}\norm{x_n}_p^p\,<\infty\bigr\},
$$
equipped with the norm
$$
\bignorm{(x_n)_{n\geq 1}}_p=\Bigl(\sum_{n=1}^{\infty}\norm{x_n}_p^p\Bigr)^{\frac{1}{p}},
$$
and so the noncommutative $L^p$-space associated with $\N$ is 
$
\ell^p\{S^p_n\}\mathop\oplus\limits^p \ell^p\{S^p_n\}.
$
Let $(\beta_n)_{n\geq 1}$ be a sequence in the interval $(0,1)$. 
We may define two operators 
$$
T_1\colon \ell^p\{S^p_n\}\to \ell^p\{S^p_n\}
\qquad \hbox{and}\qquad T_2\colon \ell^p\{S^p_n\}\to \ell^p\{S^p_n\}
$$
by setting
$$
T_1\bigl((x_n)_{n\geq 1}\bigr)\,=\, \bigl((1-\beta_n)^{\frac{1}{p}} x_n\bigr)_{n\geq1}
\qquad \hbox{and}\qquad 
T_2\bigl((x_n)_{n\geq 1}\bigr)\,=\, \bigl(\beta_n^{\frac{1}{p}}\, t_n( x_n)\bigr)_{n\geq1}
$$
for any $x=(x_n)_{n\geq 1}\in\ell^p\{S^p_n\}$. 
Consider
$$
T\colon \ell^p\{S^p_n\}\to \ell^p\{S^p_n\}\mathop{\oplus}\limits^p
\ell^p\{S^p_n\},\qquad T(x)=(T_1(x),T_2(x)).
$$
It is plain that $T$ is an isometry. Indeed for any $x=(x_n)_{n\geq 1}\in\ell^p\{S^p_n\}$, we have
\begin{align*}
\norm{T(x)}^p_p & = \norm{T_1(x)}^p_p +\norm{T_2(x)}^p_p
\\ 
& = \sum_{n=1}^{\infty}(1 -\beta_n)\norm{x_n}^p_p
\, +\, \sum_{n=1}^{\infty} \beta_n \norm{{}^t x_n}^p_p
= \sum_{n=1}^{\infty} \norm{x_n}^p_p \, =\norm{x}^p_p.
\end{align*}

Given $\epsilon>0$,  consider the above construction with 
$$
\beta_n =\frac{(1+\epsilon)^p-1}{n^p-1}.
$$
We show that $T$ is $S^1$-bounded with $\|T\|_{S^1}\leq 1+\epsilon$. Indeed consider an integer $m\geq 1$. We have 
$$
\ell^p \{S^p_n\} \bigl[S^1_m\bigr]\,=\, \ell^p \{S^p_n[S^1_m]\},
$$
and therefore, we also have that
$$
\left(\ell^p\{S^p_n\}\mathop{\oplus}\limits^p \ell^p\{S^p_n\}\right) \bigl[S^1_m\bigr]\,=\,
\ell^p\{S^p_n[S^1_m]\}\mathop{\oplus}\limits^p \ell^p\{S^p_n[S^1_m]\}.
$$
Now let $x=(x_n)_{n\geq 1}\in \ell^p \{S^p_n[S^1_m]\}$ (here each 
$x_n$ is an element of $S^p_n[S^1_m])$. Then 
$$
(I_{S^1_m}\otimes T)(x) = \Bigl(
\bigl((1-\beta_n)^{\frac{1}{p}} x_n\bigr)_{n\geq 1},
\bigl(\beta_n^{\frac{1}{p}}\, (t_n\otimes I_{S^1_m})(x_n)\bigr)_{n\geq 1}\Bigr).
$$
Consequently,
\begin{align*}
\bignorm{(I_{S^1_m}\otimes T)(x) }^p_p & = 
\sum_{n=1}^{\infty}(1 -\beta_n)\norm{x_n}^p_{S^p_{n}[S^1_m]}\, +\, \sum_{n=1}^{\infty} \beta_n \norm{(t_n\otimes I_{S^1_m})(x_n)}^p_{S^p_{n}[S^1_m]}
\\ 
& \leq \sum_{n=1}^{\infty}(1 -\beta_n)\norm{x_n}^p_{S^p_n[S^1_m]} \, +\, n^p\beta_n\norm{x_n}_{S^p_n[S^1_m]}^p\qquad\text{by \cite[Lemma 5.3 (ii)]{LMZ2}}\\
& \leq(1+\epsilon)^p\sum_{n=1}^{\infty} \norm{x_n}^p_{S^p_n[S^1_m]} =(1+\epsilon)^p\norm{x}_p^p.
\end{align*}

It is clear that $T$ is separating and that the 
Jordan homomorphism $J\colon\M\to\N$
in its Yeadon triple is given by
$$
J\bigl((x_n)_{n\geq 1}\bigr) = \bigl((x_n)_{n\geq 1},
(t_n(x_n))_{n\geq 1}\bigr).
$$
It follows that whenever $\M_1$ is a non zero summand of $\M$, the Yeadon factorization of the restriction of $T$ to $L^p(\M_1)$ is neither direct nor indirect. A fortiori, $T$ does not satisfy the assertion $(ii)$ of Theorem \ref{4.9}.
\end{example}

%%%%%%%%%%%%%%%%%%%%%%%%%%%%%%%%%%%%
\section*{acknowledgment}
The second author would like to gratefully thank ``Laboratoire de Math\'ematiques de Besan\c con" for hospitality and excellent working condition she received during her visit.

%%%%%%%%%%%%%%%%%%%%%%%%%%%%%%%%%%%%%%%%%%%%%%%%%%%%%%%%
\nocite{*}

\bibliographystyle{alpha}

\end{document}